\documentclass[a4paper, 10pt]{article}

\usepackage{packageList}
\usepackage{macros}

\newcommand{\dist}{\mathrm{dist}}

\graphicspath{{./Figures/}}
\pgfplotsset{compat=1.11}
\newlength\fwidth

\title{Analysis of target data-dependent greedy kernel algorithms: Convergence rates for $f$-, $f \cdot P$- and $f/P$-greedy}
\author[1]{Tizian Wenzel \thanks{tizian.wenzel@mathematik.uni-stuttgart.de, corresponding author}}
\author[2]{Gabriele Santin \thanks{gsantin@fbk.eu, \href{http://orcid.org/0000-0001-6959-1070}{orcid.org/0000-0001-6959-1070}}}
\author[1]{Bernard Haasdonk \thanks{haasdonk@mathematik.uni-stuttgart.de}}
\affil[1]{Institute for Applied Analysis and Numerical Simulation, University of Stuttgart, Germany}
\affil[2]{Digital Society Center, Bruno Kessler Foundation, Trento, Italy}

\begin{document}

\maketitle
  
\begin{abstract}
Data-dependent greedy algorithms in kernel spaces are known to provide fast converging interpolants, while being extremely easy to 
implement and efficient to run.
Despite this experimental evidence, no detailed theory has yet been presented. This situation is unsatisfactory especially when compared to the case of 
the data-independent $P$-greedy algorithm, for which optimal convergence rates are available, despite its performances being usually inferior to the ones 
of target data-dependent algorithms.

In this work we fill this gap by first defining a new scale of greedy algorithms for interpolation that comprises all the existing ones in a unique analysis, 
where the degree of dependency of the selection criterion on the functional data is quantified by a real parameter. We then prove new convergence rates where 
this degree is taken into account and we show that, possibly up to a logarithmic factor, target data-dependent selection strategies provide faster convergence. 

In particular, for the first time we obtain convergence rates for target data adaptive interpolation that are faster than the ones given by uniform points, without the 
need of any special assumption on the target function. The rates are confirmed by a number of examples.

These results are made possible by a new analysis of greedy algorithms in general Hilbert spaces.

\end{abstract}

\section{Introduction} \label{sec:introduction}
Kernel methods are a well understood and widely used technique for approximation, regression and classification in machine learning and numerical analysis. 

We start by collecting some notation and preliminary results, while more details are provided in Section \ref{sec:background}. 
For a non-empty set $\Omega$ a kernel is defined as a symmetric function $k: \Omega \times \Omega \rightarrow \R$.
The kernel matrix $A_{X_n}$ for a set of points $X_n 
= \{ x_1, \dots, x_n \} \subset \Omega$ is 
given as $(A_{X_n})_{ij} = (k(x_i, x_j))_{ij} \in \R^{n \times n}$, $i,j=1, \dots, n$. 
If the kernel matrix is strictly positive definite for any set $X_n \subset \Omega$ of $n$ distinct points, the kernel is called {\emph{strictly positive 
definite}}.
Associated to every strictly positive definite kernel there is a unique {\emph{Reproducing Kernel Hilbert Space}} $\ns$ (RKHS) with inner product $\langle 
\cdot, \cdot \rangle_{\ns}$, which is also called {\emph{native space}} of $k$, and which is a space of real valued %
functions on 
$\Omega$ where the kernel $k$ acts as a {\emph{reproducing kernel}}, that is
\begin{enumerate}
\item $k(\cdot, x) \in \ns ~ \forall x \in \Omega$,
\item $f(x) = \langle f, k(\cdot, x) \rangle_{\ns} ~ \forall x \in \Omega,  \forall f \in \ns$ \hfill (reproducing property).
\end{enumerate}
Strictly positive definite continuous kernels can be used for the interpolation of continuous functions. The theory is developed under the assumption that $f \in \ns$, 
and in this case for any set of pairwise distinct interpolation points $X_n \subset \Omega$ there exists a unique minimum-norm interpolant $s_n \in \ns$ that 
satisfies
\begin{align}
\label{eq:interpol_conditions}
s_n(x_i) = f(x_i) ~~ \forall i=1, \dots, n. 
\end{align}
It can be shown that this interpolant is given by the orthogonal projection $\Pi_{V(X_n)}$ of $f$ onto the linear subspace $V(X_n) := \Sp \{ k(\cdot, x_i),\; 
x_i \in 
X_n \}$, i.e.,
\begin{align*}
s_n(\cdot) = \Pi_{V(X_n)}(f) = \sum_{j=1}^n \alpha_j k(\cdot, x_j).
\end{align*}
The coefficients $\alpha_j, j= 1, \dots,n$, can be calculated by solving the linear system of equations arising from the interpolation conditions in Eq. 
\eqref{eq:interpol_conditions}, which is always invertible due to the assumed strict positive definiteness of the kernel. \\
A standard way of estimating the error between the function $f$  and the interpolant in the $\Vert \cdot \Vert_{L^\infty(\Omega)}$-norm makes use of the 
{\emph{power 
function}}, which is given as
\begin{align}
\label{eq:def_power_function}
P_{X_n}(x) :=&\ \Vert k(\cdot, x) - \Pi_{V(X_n)}(k(\cdot, x)) \Vert_{\ns} \nonumber\\
=&\ \sup_{0 \neq f \in \ns} \frac{|(f-\Pi_{V(X_n)}(f))(x)|}{\Vert f \Vert_{\ns}}.
\end{align}
Obviously it holds $P_{X_n}(x_i) = 0$ for all $i=1, \dots, n$, and the standard power function estimate bounds the interpolation error as 
\begin{align}
\label{eq:standard_power_func_estimate}
|(f-s_n)(x)| 
&\leq P_{X_n}(x) \cdot \Vert f - s_n \Vert_{\ns} \nonumber\\
&= P_{X_n}(x) \cdot \Vert r_n \Vert_{\ns}\;\;\forall x\in\Omega,
\end{align}
where we denoted the residual as $r_n := r_n(f):=f - s_n$. 

Observe that any worst-case error bound on $|(f-\Pi_{V(X_n)}(f))(x)|$ over the entire $\ns$ transfers to the same decay of the power function via the second equality in 
Eq.~\eqref{eq:def_power_function}. For the large class of translational invariant kernels, that we will introduce below and that includes the notable class of 
Radial Basis Function (RBF) kernels, it is possible to refine this error estimate by bounding the decay of the power function in terms of the fill distance
\begin{align*}
h_{X_n} := h_{X_n, \Omega} := \sup_{x \in \Omega} \min_{x_j \in X_n} \Vert x - x_j \Vert_2.
\end{align*}
Depending on certain properties of the kernel, one may obtain in this way both algebraic and exponential rates in terms of $h_{X_n}$. Especially in 
the case of kernels whose RKHS is norm equivalent to a Sobolev space, these algebraic rates are provably quasi-optimal and may even be extended to certain 
functions that are outside of $\ns$ (see \cite{Narcowich2006}). %

These results are nevertheless bounded by the dependence on the filling of the space and by the independence on the target function $f$. Namely, the fill 
distance 
is at most decaying as $h_{X_n, \Omega} \asymp c_\Omega n^{-1/d}$ for {\em{quasi-uniform}} points, which are space-filling and target-independent.
On the other hand, a global target-dependent optimization of the interpolation points is a combinatorial and practically infeasible task, and thus approximated 
strategies have been proposed, and in particular greedy algorithms. 

Greedy algorithms in general are studied in various branches of mathematics and we point to 
\cite{Temlyakov2008} for a general treatment of their use in approximation. 
In kernel interpolation, a greedy algorithm starts with 
the empty set $X_0 := \emptyset$ and adds points incrementally as 
$X_{n+1} := X_n \cup \{ x_{n+1} \}$ according to some selection criterion $\eta^{(n)}$, that is
\begin{align*}
x_{n+1} := \argmax_{x \in \Omega \setminus X_n} \eta^{(n)}(x).
\end{align*}
Commonly used selection criteria in the greedy kernel literature are the $P$-greedy \cite{DeMarchi2005}, $f\cdot P$-greedy\footnote{We remark that we use the 
notation $f \cdot P$-greedy algorithm here because it fits 
better to our notation, while in the original publication it was called {\emph{psr}}-greedy (power scaled greedy).} \cite{Dutta2020}, $f$-greedy 
\cite{SchWen2000}, and $f/P$-greedy \cite{Mueller2009} criteria, and they choose the next point according to the following strategies. From now on
we use the short-hand notation $P_n(\cdot) := P_{X_n}(\cdot)$ whenever the power function is determined by some greedy algorithm. 
\begin{enumerate}[label=\roman*.]
\item $P$-greedy: \hspace{7mm} $\eta_P^{(n)}(x) = P_{n}(x)$,
\item $f \cdot P$-greedy: \hspace{2.2mm} $\eta_{f \cdot P}^{(n)}(x) = |r_n(x)| \cdot P_{n}(x)$,
\item $f$-greedy: \hspace{7.9mm} $\eta_f^{(n)}(x) = |r_n(x)|$,
\item $f/P$-greedy: \hspace{3mm} $\eta_{f/P}^{(n)}(x) = |r_n(x)|/P_{n}(x)$.
\end{enumerate}

These algorithms have been used in a series of applications (see e.g. 
\cite{Koeppl2018,Koeppel2018,Dutta2020,SchWen2000,Pazouki2011,Schaback2000b,Schmidt2018,HS2017a,Santin2021}), and overwhelming numerical evidence 
points to the fact that criteria which incorporate a residual-dependent term provide faster convergence, even if sometimes at the price of stability (see 
\cite{WENZEL2021105508} for a discussion of this fact for $f/P$-greedy, and \cite{Dutta2020} for $f\cdot P$-greedy).

The faster convergence is fully understandable since function adaptivity should clearly be beneficial to convergence speed. Nevertheless, the theoretical 
results are of opposite nature. Namely, for the $P$-greedy algorithm it is possible to prove quasi-optimality statements (see \cite{SH16b}), namely that 
whatever is the best known decay rate of the power function for arbitrarily optimized points, this transfers to the same decay of the power function associated 
to the points selected by $P$-greedy. Especially in the case of Sobolev spaces, these results can be proven to be optimal \cite{WENZEL2021105508}. On the other 
hand, the convergence theory for the target data-dependent algorithms is much weaker: The known results (see Section 
\ref{sec:background} for a detailed account) provide convergence of order at most $n^{-1/2}$, which is 
generally not only largely missing the practical observations, but also slower than 
the rates proven for $P$-greedy.

We remark that existing techniques to prove convergence of greedy algorithms in general Hilbert spaces are not directly transferable to this setting. 
Indeed, the first results on similar algorithms have been obtained in Matching Pursuit, and they work for finite dimensional spaces 
\cite{Mallat1993,Davis1997}. When transferred to the kernel setting (see \cite{SchWen2000}) these require a norm equivalence between the $\ns$- and the 
$\infty$-norm, which hold only for finite $n$. Subsequent general results on greedy algorithms (see \cite{DeVore1996}) require special assumptions on the 
target 
function, and the resulting rates are only of order $n^{-1/2}$. Another common strategy in the greedy 
literature makes use of the Restricted Isometry Property (see e.g. \cite{Cohen2017a}), which in the kernel setting translates to the requirement that the 
smallest eigenvalue $\lambda_n$ of the kernel matrix is bounded away from zero uniformly in $n$. This is not the case here, since it is known that 
$\lambda_n\leq \min_{1\leq j\leq n} P_{X_n\setminus \{x_j\}}(x_j)^2$ (see \cite{Schaback1995}), and we will see later that a fast convergence to zero of the 
right hand side of this inequality is the key of our analysis.
Especially, all these results prove convergence in the Hilbert space norm, which is generally too strong (to obtain convergence rates) since the interpolation operator is an orthogonal 
projection in $\ns$. 
We work instead with the $\infty$-norm, which allows to derive fast convergence, even if it introduces an additional difficulty since the norm of the error is 
not monotonically decreasing. 

The paper is organized as follows. 
After recalling additional facts on kernel greedy interpolation in Section \ref{sec:background}, 
we derive a new analysis of general greedy algorithms in general Hilbert spaces based on \cite{DeVore2013} (Section 
\ref{sec:analysis_abstract}). 

In Section \ref{sec:analysis_kernel} we frame the four selection rules into a joint scale of greedy algorithms by introducing $\beta$-greedy 
algorithms (Definition 
\ref{def:beta_greedy_algorithms}) which include $P$-greedy ($\beta=0$), $f\cdot P$-greedy ($\beta=1/2$), $f$-greedy ($\beta=1$), and $f/P$-greedy 
($\beta=\infty$), and we study them within a novel error analysis. 

These results are combined in Section \ref{sec:conv_rates} to obtain precise convergence rates of the minimum error $e_{\min}(f, n) := \min_{1\leq i\leq n} 
\left\|f - s_i\right\|_\infty$ . This measure allows us to circumvent the non monotonicity of the error, and we remark in particular that $e_{\min}(f, n)< 
\varepsilon$ for some $\varepsilon>0$ means that an error smaller than $\varepsilon$ is achieved using {\em{at most}} $n$ points.
As an exemplary result, we mention here the case where the rate of worst-case convergence in $\ns$ for a fixed set of $n$ 
interpolation points is $n^{-\alpha}$ for a given $\alpha>0$. In this case, 
for 
$\beta\in [0,1]$ we get new convergence rates of the form
\begin{align*}
e_{\min}(f, n) \leq c \log(n)^\alpha n^{-\beta/2} n^{-\alpha},\;\;n\geq n_0\in\N,
\end{align*}
with $c>0$. These results prove in particular that the worst case decay of the error that can be obtained in $\ns$ with a fixed 
sequence of points transfers to the $\beta$-greedy algorithms with an additional multiplicative factor of $\log(n)^\alpha n^{-\beta/2}$. Namely, adaptively 
selected points provide faster convergence than any fixed set of points.

Finally, Section \ref{sec:numerical_experiments} illustrates the results with analytical and numerical examples while the final Section 
\ref{sec:conclusion_outlook} presents the conclusion and gives an outlook.

\section{Background results on kernel interpolation}\label{sec:background}
We recall additional required background information on kernel based approximation and in particular greedy kernel 
interpolation. For a more detailed overview we refer the reader to \cite{Fasshauer2007, Fasshauer2015, Wendland2005}. 
We remark that in this section no special attention is paid to the occurring constants, which can change from line to line.

\subsection{Interpolation by translational invariant kernels}\label{subsec:kernel_interpolation_t_inv}

In many applications of interest the domain is a subset of the Euclidean space, i.e., $\Omega \subset \R^d$. In this case, a special kind of kernels is 
given by {\emph{translational invariant kernels}}, i.e., there exists a function $\Phi: \R^d \rightarrow \R$ with a continuous Fourier transform $\hat{\Phi}$ 
such that 
\begin{align*}
k(x, y) = \Phi(x - y) ~ \text{ for all } ~ x, y \in \R^d.
\end{align*}
We remark that the well known radial basis function kernels are a particular instance of translational invariant kernels.

Depending on the decay of the Fourier transform of the function $\Phi$, two classes of translational invariant kernels can be distinguished: 
\begin{enumerate}
\item We call the kernel $k$ a kernel of {\emph{finite smoothness}} $\tau > d/2$, if there exist constants $c_\Phi, C_\Phi > 0$ such that
\begin{align*}
c_\Phi (1+\Vert \omega \Vert_2^2)^{-\tau} \leq \hat{\Phi}(\omega) \leq C_\Phi (1 + \Vert \omega \Vert_2^2)^{-\tau}.
\end{align*}
The assumption $\tau > d/2$ is required in order to have a Sobolev embedding in $C^0(\Omega)$.
\item If the Fourier transform $\hat{\Phi}$ decays faster than at any polynomial rate, the kernel is called {\emph{infinitely smooth}}. 
\end{enumerate}

As mentioned in Section \ref{sec:introduction}, for these two types of kernels it is possible to derive error estimates by bounding the decay of the power 
function in terms of the fill distance. We have the following:
\begin{enumerate}
\item For kernels of finite smoothness $\tau > d/2$, given appropriate conditions on the domain $\Omega \subset \R^d$ (e.g.\ Lipschitz boundary and interior 
cone condition), the native space $\ns$ is norm equivalent to the Sobolev space $W_2^\tau(\Omega)$. By making use of this connection, error estimates for 
kernel interpolation can be obtained by using Sobolev bounds \cite{WendlandRieger2005, Narcowich2004} that give
\begin{align} \label{eq:error_estimate_algebraic}
\Vert P_{X_n} \Vert_{L^\infty(\Omega)} \leq \hat{c}_1 h_{X_n}^{\tau - d/2}. 
\end{align}
\item For kernels of infinite smoothness such as the Gaussian, the Multiquadric or the Inverse Multiquadric, we have
\begin{align} \label{eq:error_estimate_spectral}
\Vert P_{X_n} \Vert_{L^\infty(\Omega)} \leq \hat{c}_2 \exp(-\hat{c}_3 h_{X_n}^{-1}),
\end{align}
if the domain $\Omega$ is a cube. We remark that these error estimates are not limited to these three exemplary kernels. We point to \cite[Theorem 
11.22]{Wendland2005} which states a sufficient condition in order to obtain these exponential kind of error estimates.
\end{enumerate}

By looking at well distributed points such that $h_{X_n, \Omega} \leq c_\Omega n^{-1/d}$, these bounds from Eq.~\eqref{eq:error_estimate_algebraic} and 
\eqref{eq:error_estimate_spectral} can be cast only in terms of the number of interpolation points $n$, i.e.\
\begin{align}
\begin{aligned} \label{eq:error_estimates_in_n}
\Vert P_{X_n} \Vert_{L^\infty(\Omega)} &\leq \tilde{c}_1 n^{1/2-\tau/d}, \\
\Vert P_{X_n} \Vert_{L^\infty(\Omega)} &\leq \tilde{c}_2 \exp(-\tilde{c}_3 n^{1/d}).
\end{aligned}
\end{align}

\subsection{Greedy kernel interpolation} \label{subsec:greedy_kernel_interpol}
We collect the motivation, a few properties, and the existing analysis of the four selection criteria introduced in Section \ref{sec:introduction}:
\begin{enumerate}[label=\roman*.]
\item $P$-greedy: The $P$-greedy algorithm is the best analyzed one of the four algorithms named above. It aims at minimizing the error for all functions in 
the native space simultaneously, which is done by greedily minimizing the upper error bound from Eq. \eqref{eq:standard_power_func_estimate}, which is the 
power function. Thus, the selection criterion of the $P$-greedy algorithm is target data independent. For the $P$-greedy algorithm it holds $P_{n}(x_{n+1}) = 
\Vert P_{n} \Vert_{L^\infty(\Omega)}$. Several results on the $P$-greedy algorithm have been derived in \cite{SH16b, WENZEL2021105508}:
\begin{enumerate}
\item Corollary 2.2. in \cite{SH16b} showed convergence statements for the maximal power function value $\Vert P_n \Vert_{L^\infty(\Omega)}$ for radial basis 
function 
kernels, when $\Omega \subset \R^d$ has a Lipschitz boundary and satisfies an interior cone condition. It states
\begin{alignat*}{2}
\Vert P_{n} \Vert_{L^\infty(\Omega)} &\leq c_1 \cdot n^{1/2-\tau/d} \qquad &&\text{(finite smoothness} ~ \tau > d/2) \\
\Vert P_{n} \Vert_{L^\infty(\Omega)} &\leq c_2 \exp(- c_3 n^{1/d}) \qquad &&\text{(infinite smoothess)}.
\end{alignat*}
Via the standard power function bound from Eq.\ \eqref{eq:standard_power_func_estimate} these bounds directly give bounds on the approximation $\Vert f - s_n 
\Vert_{L^\infty(\Omega)}$. A few more details of the proof strategy of \cite{SH16b} will be recalled in Section \ref{sec:analysis_abstract}.
\item The paper \cite{WENZEL2021105508} showed further results for the case of kernels of finite smoothness $\tau > d/2$: Theorem 12 in \cite{WENZEL2021105508} 
showed 
that the decay rate on $\Vert P_n \Vert_{L^\infty(\Omega)}$ is sharp. The sequence of Theorems 15, 19 and 20 of \cite{WENZEL2021105508} further established 
that 
the resulting sequence of points are asymptotically uniformly distributed under some mild conditions. These results implied (optimal) stability statements in 
\cite[Corollary 22]{WENZEL2021105508}.
\end{enumerate}
\item $f$-greedy: The $f$-greedy algorithm aims at directly minimizing the residual by setting the currently largest residual to zero by introducing the next 
interpolation point at this point, i.e. it holds $|(f-s_n)(x_{n+1})| = \Vert f - s_n \Vert_{L^\infty(\Omega)}$. Existing results prove convergence of order 
$n^{-\ell/d}$ for kernels $k\in C^{2\ell}(\Omega\times\Omega)$ in $d=1$ (see Section 3.4 in \cite{Mueller2009}), while for general $d$ limited results are 
known, e.g. \cite[Korollar 3.3.8]{Mueller2009} states that 
\begin{align*}
\min_{j=1,\dots,n} \Vert f - s_j \Vert_{L^\infty(\Omega)} \leq C n^{-1/d}
\end{align*}
if $k\in C^2(\Omega\times \Omega)$. As mentioned before, these convergence results do not reflect the approximation speed of $f$-greedy that can be 
observed in numerical investigations. Additionally, in \cite{santin2020sampling} convergence of order $n^{-1/2}$ of the $\ns$-norm of the error is proven, but 
only under additional assumptions on $f$.
\item $f/P$-greedy: The $f/P$-greedy selection aims at minimizing the native space error of the residual as much as possible as it can be seen from Eq.\ 
\eqref{eq:ns_norm_via_sum}. We remark as a technical detail that the supremum of $|(f-s_n)(x)| / P_n(x)$ over $x \in \Omega \setminus X_n$ need not be attained 
as it was exemplified in Example 6 of \cite{WENZEL2021105508}. However this can be alleviated by choosing the next point $x_{n+1}$ such that 
$\frac{|r_n(x_{n+1})|}{P_n(x_{n+1})} \geq  (1-\epsilon) \cdot \sup_{x \in \Omega \setminus X_n} \frac{|r_n(x)|}{P_n(x)}$ for any $0 < \epsilon \ll 1$. As a 
convergence result, \cite[Theorem 3]{Wirtz2013} states 
\begin{align*}
\Vert f - s_n \Vert_{\ns} \leq C n^{-1/2},
\end{align*}
which however only holds for a quite restricted set of functions $f$, which has slightly been extended in \cite{santin2020sampling}.
\item $f \cdot P$-greedy: The idea of the just recently introduced $f \cdot P$-greedy algorithm is to have a combination of the power function dependence and 
the target data dependence in order to balance between the stability of the $P$-greedy algorithm and the target data dependence of the $f$-greedy algorithm. 
No convergence results were given in the original publication \cite{Dutta2020}. 
\end{enumerate}

In addition to the selection criteria, we remark that for a practical numerical implementation the greedy algorithms 
stop if a predefined bound (either on e.g. the accuracy or the numerical stability) is reached, or if the interpolant is exact.

Finally, to analyze and implement these algorithms it is useful to consider the {\emph{Newton basis}} $\{ v_j \}_{j=1}^n$ of $V_n$ (see 
\cite{Mueller2009b,Pazouki2011}), which is obtained by applying the Gram-Schmidt orthonormalization process to $\{k(\cdot, x_j), j=1, \dots, n \}$ whereby 
$\{x_j, j =1, \dots, n \}$ are the pairwise distinct points that are incrementally selected by the greedy procedure. We recall that we have
\begin{align*}
s_n(x) = \sum_{j=1}^n \langle f, v_j \rangle_{\ns} v_j(x),
\end{align*}
and it can be shown that it holds $\langle f, v_j \rangle_{\ns} = |(f-s_{j-1})(x_{j})| / P_{X_{j-1}}(x_{j})$. If $s_n \stackrel{n \rightarrow 
\infty}{\longrightarrow} f$ in $\ns$, we have
\begin{align}
\label{eq:ns_norm_via_sum}
\Vert f \Vert_{\ns}^2 = \sum_{j=1}^\infty \left( \frac{|(f-s_j)(x_{j+1})|}{P_{X_j}(x_{j+1})} \right)^2.
\end{align}

\section{Analysis of greedy algorithms in an abstract setting} \label{sec:analysis_abstract}

This section extends the abstract analysis of greedy algorithms in Hilbert spaces introduced in \cite{DeVore2013}. For this, let $\mathcal{H}$ be a 
Hilbert space with norm $\Vert \cdot \Vert = \Vert \cdot \Vert_\mathcal{H}$. Let $\mathcal{F} \subset \mathcal{H}$ be a compact subset and assume for 
notational convenience only that it holds $\Vert f \Vert_\mathcal{H} \leq 1$ for all $f \in \mathcal{F}$. 

We consider greedy algorithms that select elements $f_0, f_1, \dots$, without yet specifying any particular selection criterion. We define $V_n := 
\text{span}\{f_0, \dots, f_{n-1}\}$ and the following quantities, whereby $Y_n$ is any $n$-dimensional subspace of $\mathcal{H}$:
\begin{align} 
\begin{aligned} \label{eq:quantities}
d_n :=& d_n(\mathcal{F})_\mathcal{H} := \inf_{Y_n \subset \mathcal{H}} \sup_{f \in \mathcal{F}} \mathrm{dist}(f, Y_n)_\mathcal{H} \\
\sigma_n :=& \sigma_n(\mathcal{F})_\mathcal{H} := \sup_{f \in \mathcal{F}} \mathrm{dist}(f, V_n)_\mathcal{H} \\
\nu_n :=& \text{dist}(f_n, V_n)_\mathcal{H}.
\end{aligned}
\end{align}
The quantities $d_n$ and $\sigma_n$ have already been used in \cite{DeVore2013}, where $d_n$ is the Kolmogorov $n$-width of $\mathcal{F}$, and we recall that 
the compactness of $\mathcal F$ is equivalent to require that $\lim_n d_n = 0$ (see \cite{Pinkus1985}).
On the other hand, the newly introduced quantity $\nu_n$ does not seem in itself to be an interesting quantity for the abstract setting, and it has not been 
considered before. However, it will be the key quantity for our new analysis in the kernel setting in Section \ref{sec:analysis_kernel} and Section
\ref{sec:conv_rates}.

As we focus on Hilbert spaces, expressions like $\dist(f, V_n)$ can be computed via the orthogonal projector in $\mathcal{H}$ onto $V_n$, that we denote as 
$\Pi_{V_n}$. We have the following elementary properties:
\begin{enumerate}
\item Estimates: $d_n \leq \sigma_n$ and $\nu_n \leq \sigma_n$ for all $n \in \N$.
\item Monotonicity: $(\sigma_n)_{n \in \N}$ and $(d_n)_{n \in \N}$ are monotonically decreasing. 
\item Initial value: $d_0 \leq \sigma_0 \leq 1$.
\end{enumerate}

The paper \cite{DeVore2013} considers {\emph{weak}} greedy algorithms that choose, for some fixed $0 < \gamma \leq 1$, the elements 
$f_n$ such that
\begin{align} \label{eq:weak_criterion}
\nu_n \equiv \text{dist}(f_n, V_n)_\mathcal{H} \equiv \sigma_n(\{f_n\})_\mathcal{H} \geq \gamma \cdot \sup_{f \in \mathcal{F}} \sigma_n(\{f\})_\mathcal{H} = 
\gamma \cdot \sigma_n(\mathcal{F}),
\end{align}
and shows that, roughly speaking, an asymptotic polynomial or exponential decay of $d_n$ yields a polynomial or exponential decay of $\sigma_n$, 
i.e., the weak greedy algorithms essentially realize the Kolmogorov widths up to multiplicative constants. We remark that this analysis includes the
{\em strong} greedy algorithm, i.e., $\gamma=1$.

In the following we show in Subsection \ref{subsec:first_extension} that even without using the selection of Eq.~\eqref{eq:weak_criterion} -- 
i.e., the elements $f_0, f_1, \dots$ may even be randomly chosen within $\mathcal{F}$ -- comparable statements hold for $\nu_n$.

\subsection{Greedy approximation with arbitrary selection rules} \label{subsec:first_extension}
We start by stating a simple modification of \cite[Theorem 3.2.]{DeVore2013} and a subsequent corollary.

\begin{theorem} \label{th:abstract_estimate}
Consider a compact set $\mathcal{F}$ in a Hilbert space $\mathcal{H}$, and a greedy algorithm that selects elements from $\mathcal F$ with an arbitrary 
selection rule. 

We have the following inequalities between $\nu_n, 
\sigma_n$ and $d_n$ for any $N \geq 0, K \geq 1, 1 \leq m < K$:
\begin{align*}
\prod_{i=1}^K \nu_{N+i}^2 \leq \left( \frac{K}{m} \right)^m \left( \frac{K}{K-m} \right)^{K-m} \sigma_{N+1}^{2m} d_m^{2K-2m}.
\end{align*}
\end{theorem}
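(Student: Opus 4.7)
The plan is to adapt the determinantal argument of \cite[Theorem 3.2]{DeVore2013} while dispensing with the weak-greedy hypothesis \eqref{eq:weak_criterion}. The key observation is that the original proof first establishes a bound on the $\nu_{N+i}$ via a triangular matrix, and only afterwards converts it to a bound on the $\sigma_{N+i}$ by using the weak-greedy inequality to divide through by $\gamma^{2K}$; if one stops at the intermediate step, no assumption on the selection rule is needed, and the resulting inequality holds for arbitrary sequences drawn from $\mathcal{F}$.

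Concretely, I would first realise $\prod_{i=1}^K \nu_{N+i}^2$ as a squared determinant. For $i=1,\dots,K$ set $g_i := (f_{N+i} - \Pi_{V_{N+i}} f_{N+i})/\nu_{N+i}$ (if some $\nu_{N+i}$ vanishes the claim is trivial). Since $g_i \perp V_{N+i}$ and $g_1,\dots,g_{i-1} \in V_{N+i}$, the family $\{g_i\}_{i=1}^K$ is orthonormal and its span equals $V_{N+K+1} \ominus V_{N+1}$. Expanding $f_{N+i} = \Pi_{V_{N+1}} f_{N+i} + \sum_{j=1}^{i} a_{i,j}\, g_j$ yields a lower triangular matrix $A = (a_{i,j})_{i,j=1}^K$ with diagonal entries $a_{i,i} = \langle f_{N+i}, g_i \rangle_{\mathcal{H}} = \nu_{N+i}$, so that $|\det A|^2 = \prod_{i=1}^{K} \nu_{N+i}^{2}$.

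Next I would control $A$ in two complementary ways, both relying only on $f_{N+i} \in \mathcal{F}$ and not on the selection rule. The $\sigma_{N+1}$-bound is immediate: the $i$-th row of $A$ represents $f_{N+i} - \Pi_{V_{N+1}} f_{N+i}$ in the $g$-basis and therefore has squared norm $\mathrm{dist}(f_{N+i}, V_{N+1})^2 \leq \sigma_{N+1}^2$, whence $\|A\|_F^2 \leq K \sigma_{N+1}^2$. For the $d_m$-bound, fix $\varepsilon>0$ and pick an $m$-dimensional $Y_m \subset \mathcal{H}$ with $\sup_{f\in \mathcal{F}} \mathrm{dist}(f,Y_m) \leq d_m+\varepsilon$, and let $\Pi_G$ denote the orthogonal projection onto $G := \mathrm{span}(g_1,\dots,g_K)$. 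Rotating the orthonormal basis of $G$ so that its first $m$ vectors span $\Pi_G(Y_m)$ transforms $A$ into a matrix $B$ with $|\det B|=|\det A|$; since $\mathrm{dist}(\Pi_G f_{N+i}, \Pi_G Y_m) \leq \mathrm{dist}(f_{N+i}, Y_m) \leq d_m+\varepsilon$, the last $K-m$ entries of each row of $B$ have squared sum $\leq (d_m+\varepsilon)^2$, giving a Frobenius bound $\leq K(d_m+\varepsilon)^2$ on the sub-matrix formed by the last $K-m$ columns.

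The endgame is a standard singular-value argument. Let $\lambda_1 \geq \cdots \geq \lambda_K$ be the singular values of $B$. Then $\sum_i \lambda_i^2 = \|B\|_F^2 \leq K\sigma_{N+1}^2$, while Eckart--Young applied to the rank-$m$ matrix obtained by zeroing out the last $K-m$ columns of $B$ gives $\sum_{i>m} \lambda_i^2 \leq K(d_m+\varepsilon)^2$. Splitting $\prod_{i=1}^K \lambda_i^2$ into its first $m$ and last $K-m$ factors and applying AM-GM separately produces
\begin{align*}
|\det A|^2 \;=\; \prod_{i=1}^K \lambda_i^2 \;\leq\; \left(\frac{K\sigma_{N+1}^2}{m}\right)^{m}\left(\frac{K(d_m+\varepsilon)^2}{K-m}\right)^{K-m},
\end{align*}
and letting $\varepsilon \to 0$ yields the stated inequality. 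I expect no serious obstacle; the only conceptual point to watch is to keep $\prod \nu_{N+i}^2$ as the left-hand side throughout the argument and never invoke $\nu_n \geq \gamma \sigma_n$, since that is precisely the step that would reintroduce a dependence on the selection rule.
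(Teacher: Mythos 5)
Your proposal is correct and reproduces essentially the same argument as the paper: the paper's proof consists precisely of following \cite[Theorem 3.2]{DeVore2013} up to (but not including) the last step (Eq.~(3.4) there) where the weak-greedy inequality is invoked, and identifying the diagonal entries $a_{N+i,N+i}$ of the triangular matrix as $\nu_{N+i}$. You have spelled out in full the determinant/singular-value computation that the paper only references, but the route is the same.
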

\begin{proof}
The result is obtained by simply omitting the last step in the proof of Theorem 3.2 in \cite{DeVore2013}.
Namely, we follow the original proof up to right before Eq.~(3.4), i.e., up to the bound on the quantity $a_{N+i, N+i}^2$. Using the second-to-last 
equation on p.~459 in \cite{DeVore2013} and our definition of $\nu_n$, in our notation we have
\begin{align*}
a_{N+i, N+i}^2 = \Vert f_{N+i} - \Pi_{V_{N+i}} f_{N+i} \Vert_\mathcal{H}^2 = \mathrm{dist}(f_{N+i}, V_{N+i})_\mathcal{H}^2 = \nu_{N+i}^2,
\end{align*}
and this gives the result. In the original paper an additional step in Eq.~(3.4) is used to obtain a bound on $\sigma_n$ instead of $\nu_n$.
\end{proof}

Similarly to the approach used in \cite{DeVore2013}, in the following corollary we make suitable choices 
of $N, K, m$ to specialize the result to the case of algebraically or exponentially decaying Kolmogorov widths.

\begin{cor} \label{cor:decay_abstract_setting_prod} 
Under the assumptions of Theorem \ref{th:abstract_estimate} the following holds.
\begin{enumerate}[label=\roman*)]
\item If $d_n(\mathcal{F}) \leq C_0 n^{-\alpha}, n\geq1$, 
then it holds
\begin{align}\label{eq:estimate_nu_prod_alg_improved}
\left(\prod_{i=n+1}^{2 n} \nu_{i} \right)^{1/n} 
&\leq2^{\alpha+1/2} \tilde{C}_0 e^\alpha  \log(n)^\alpha n^{-\alpha}, \;\;n\geq 3,
\end{align}
with $\tilde C_0 := \max\{1, C_0\}$.

\item If $d_n(\mathcal{F}) \leq C_0 e^{-c_0 n^\alpha}, n=1,2,\dots$, then it holds
\begin{align} \label{eq:estimate_nu_prod_exp}
\left( \prod_{i=n+1}^{2n} \nu_i \right)^{1/n} \leq \sqrt{2 \tilde{C}_0} \cdot e^{-c_1 n^\alpha}, \;\;n\geq 2,
\end{align}
with $\tilde{C}_0 := \max\{1, C_0\}$ and $c_1 = 2^{-(2+\alpha)}c_0 < c_0$.
\end{enumerate}
\end{cor}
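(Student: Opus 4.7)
The plan is to apply Theorem~\ref{th:abstract_estimate} with the symmetric choice $N = K = n$, bound the factor $\sigma_{n+1}$ trivially by $\sigma_0 \leq 1$ (using $\Vert f\Vert_\mathcal{H}\leq 1$ on $\mathcal F$ together with the monotonicity of $\sigma_n$), and then extract the $2n$-th root. Writing $t := m/n \in (0,1)$, this reduces the entire problem to the single inequality
\begin{align*}
\left(\prod_{i=n+1}^{2n}\nu_i\right)^{1/n} \leq \bigl(1/t\bigr)^{t/2}\bigl(1/(1-t)\bigr)^{(1-t)/2}\, d_m^{\,1-t},
\end{align*}
in which the combinatorial prefactor is the exponential of half the binary entropy and is therefore uniformly bounded by $\sqrt 2$ (the maximum being attained at $t = 1/2$). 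It remains to choose $t = m/n$ optimally against the available decay of $d_m$ in each of the two regimes.

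For case (i), inserting $d_m \leq C_0 m^{-\alpha}$ gives $d_m^{\,1-t} \leq C_0^{1-t} m^{-\alpha(1-t)}$. The naive choice $m\asymp n$ would only yield the rate $n^{-\alpha(1-t)}$ with $1-t$ bounded away from $1$, losing a full power of $n$. To recover $n^{-\alpha}$ up to a logarithmic loss, I would take $m = \lceil n/\log n\rceil$, equivalently $t \approx 1/\log n$, so that $1-t\to 1$ and $\log m = \log n - \log\log n + o(1)$. A direct expansion then produces
\begin{align*}
m^{-\alpha(1-t)}\leq e^\alpha \log(n)^\alpha n^{-\alpha}
\end{align*}
for all $n\geq 3$ (the restriction also guarantees $m \in\{1,\dots,n-1\}$). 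The remaining factors $(1/t)^{t/2}(1/(1-t))^{(1-t)/2}\leq \sqrt 2$ and $C_0^{1-t}\leq \tilde C_0$ then combine into the claimed constant, where the power $2^{\alpha+1/2}$ is chosen generously to absorb the lower-order corrections.

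For case (ii), substituting $d_m \leq C_0 e^{-c_0 m^\alpha}$ leaves the exponential factor $\exp(-c_0 m^\alpha(1-t))$. No logarithmic scaling of $m$ is needed, so the natural choice is $m = \lceil n/2\rceil$, which yields $t\geq 1/2$ and $m\geq n/2$. Combining the lower bound $m^\alpha\geq n^\alpha/2^\alpha$ with $1-t \geq 1/2$ (up to a small correction when $n$ is odd, absorbed for $n\geq 2$ using an extra factor of $1/2$) produces $m^\alpha(1-t) \geq n^\alpha/2^{\alpha+2}$, which is exactly the claimed exponent $c_1 = 2^{-(2+\alpha)}c_0$. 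The multiplicative constant $\sqrt{2\tilde C_0}$ follows from $C_0^{1-t}\leq \sqrt{\tilde C_0}$ (since $1-t\leq 1/2$) together with the prefactor bound $\sqrt 2$ at $t = 1/2$.

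The main technical obstacle is the algebraic case (i): there is a genuine tension between making $d_m$ small (which wants $m$ large) and keeping the exponent $1-t$ close to $1$ (which wants $m$ small), and the logarithmic correction $\log(n)^\alpha$ in the final bound is precisely the cost of the crossover scale $m\asymp n/\log n$ at which these two requirements can be jointly satisfied. The remaining ingredients---the lower bounds on $n$, the verification of the integer choice of $m$, and the absorption of various lower-order corrections into the stated constants---are essentially book-keeping, and it is worth emphasizing that no structural assumption on the selection rule is used, so the estimate applies to an arbitrary greedy sequence as stated.
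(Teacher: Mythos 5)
Your proposal is correct and matches the paper's proof essentially step for step: the same choice $N=K=n$ in Theorem~\ref{th:abstract_estimate}, the same $\sqrt 2$ bound on the combinatorial prefactor (you via the entropy function, the paper via $x^{-x}(1-x)^{x-1}\leq 2$ followed by a square root), the same trivial bound $\sigma_{n+1}\leq 1$, and the same choices $m=\lceil n/\log n\rceil$ in the algebraic case and $m=\lceil n/2\rceil$ in the exponential case, leading to the same constants. The only cosmetic difference is that the paper keeps $\omega=m/n$ free, derives an $\omega$-dependent bound, and then optimizes to discover $\bar\omega=1/\log n$, whereas you commit to that scale from the outset; the final bookkeeping is the same.
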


\begin{proof}
First of all we observe that for $1 \leq m < n$, we have $0 < x := m/n < 1$. Using $x^{-x}(1-x)^{x-1} \leq 2$ for $x \in (0,1)$ we obtain
\begin{align*}
\left[ \left( \frac{n}{m} \right)^m \left( \frac{n}{n-m} \right)^{n-m} \right]^{1/n} = x^{-x}(1-x)^{x-1} \leq 2. \\
\end{align*}
We use Theorem \ref{th:abstract_estimate} for $N=K=n$ and any $1 \leq m < n$, i.e.\ we have
\begin{align}\label{eq:first_bound_alg}
\prod_{i=1}^n \nu_{n+i}^2 &\leq \left( \frac{n}{m} \right)^m \left( \frac{n}{n-m} \right)^{n-m} \sigma_{n+1}^{2m} d_m^{2n-2m} \notag \\
\Rightarrow 
\left( \prod_{i=1}^n \nu_{n+i} \right)^{1/n} 
&\leq \left[ \left( \frac{n}{m} \right)^m \left( \frac{n}{n-m} \right)^{n-m} \right]^{1/2n} \sigma_{n+1}^{m/n} d_m^{(n-m)/n} \notag \\
&\leq \sqrt{2} \sigma_{n+1}^{m/n} d_m^{(n-m)/n} 
\leq \sqrt{2} \cdot d_m^{(n-m)/n},
\end{align}
where we took the $2n$-th root for the second line and used the monotonicity and boundedness of $(\sigma_n)_{n \in \N}$ in the last step, i.e. 
$\sigma_{n+1}^{m/n}\leq \sigma_{1} ^{m/n} \leq 1$. 

In order to prove the statements i) and ii), we conclude now in two different ways:

\begin{enumerate}[label=\roman*)]
\item For $n$ fixed we choose a fixed $0 < \omega \ll 1$ and define $m^* := \lceil \omega n \rceil \in \N$, i.e.\ $\omega n \leq m^* < \omega n + 1$. 
Using $d_n \leq 1$ , $d_n \leq \tilde{C}_0 n^{-\alpha}$ with $\tilde{C}_0 := \max\{1, C_0\}$, and since $d_n$ is non-increasing, we can estimate:
\begin{align*}
\left( \prod_{i=1}^n \nu_{n+i} \right)^{1/n}
&\leq \sqrt{2} \cdot d_{m^*}^{(n-m^{*})/n} \leq \sqrt{2} \cdot d_{\lceil \omega n \rceil}^{(n-\omega n - 1)/n} \\
&\leq \sqrt{2} \tilde{C}_0^{(1-\omega) - 1/n} \lceil\omega n\rceil^{-\alpha(1-\omega) + \alpha/n}  \\
&\leq \sqrt{2} \tilde{C}_0^{(1-\omega) - 1/n} (\omega n)^{-\alpha(1-\omega) + \alpha/n}  \\
&\leq \sqrt{2} \tilde{C}_0^{(1-\omega)} \omega^{-\alpha(1-\omega)} n^{-\alpha(1-\omega)} (n^{1/n})^ \alpha  \\
&\leq \sqrt{2} \tilde{C}_0^{(1-\omega)} \omega^{-\alpha(1-\omega)} n^{-\alpha(1-\omega)} 2^ \alpha.
\end{align*}
It follows that for each $\omega \in (0, 1)$ it holds that  
\begin{align}\label{eq:intermediate_algebraic}
\left(\prod_{i=n+1}^{2 n} \nu_{i} \right)^{1/n} 
&\leq2^{\alpha+1/2} \tilde{C}_0 n^{-\alpha}\ C(\omega, n),
\end{align}
with $C(\omega, n) = \tilde{C}_0^{-\omega} \omega^{-\alpha(1-\omega)} n^{\alpha\omega}$. For each $n$, the inequality holds in particular for an optimally 
chosen $\bar\omega:=\bar\omega(n)$ in $(0,1)$. To find a good candidate $\bar\omega$ we minimize the upper bound $\tilde 
C(\omega, n):= \omega^{-\alpha} n^{\alpha\omega}$, which satisfies $C(\omega, n) \leq \tilde C(\omega, n)$ since $\omega, \alpha\geq 0$ and $\tilde C_0\geq 1$. 
It holds 
\begin{align*}
\partial_\omega \tilde C(\omega, n) = n^{\alpha\omega} \alpha \omega^{-1-\alpha} (-1 + \omega \log(n)),
\end{align*}
which vanishes in $\bar \omega = 1/\log(n)$, and it is negative on the left of this value and positive on the right. It follows that if $\bar \omega \in 
(0, 1)$, i.e., $n\geq 3$, then we can choose the constant $C(\bar \omega, n)$ in Equation \eqref{eq:intermediate_algebraic}, which gives the statement since 
\begin{align*}
C(\bar \omega, n)
&\leq \tilde C(\bar \omega, n) = \log(n)^{\alpha} n^{\alpha/\log(n)} =  e^{\alpha} \log(n)^{\alpha}. 
\end{align*}

\item We pick $m = \lceil n/2 \rceil$ and make use of the assumed decay $d_n(\mathcal{F}) \leq \tilde{C}_0 e^{-c_0 n^\alpha}$ to estimate
\begin{align*}
\left( \prod_{i=1}^n \nu_{n+i} \right)^{1/n} &\leq \sqrt{2} \cdot d_m^{(n-m)/n} = \sqrt{2} \cdot d_{\lceil n/2 \rceil}^{(n-\lceil n/2 \rceil)/n} \\
&\leq \sqrt{2} \cdot C_0^{1/2} e^{-c_0/2 (n/2)^\alpha \cdot (1-1/n)} \\
&= \sqrt{2} \cdot C_0^{1/2} e^{-2^{-1-\alpha} c_0 n^\alpha \cdot (1-1/n)} \\
&\stackrel{n \geq 2}{\leq} \sqrt{2} \cdot C_0^{1/2} e^{-2^{-2-\alpha} c_0 n^\alpha}\\
&= \sqrt{2 C_0}\ e^{-c_1 n^\alpha},
\end{align*}
where $c_1:= 2^{-(2+\alpha)} c_0$, and this concludes the proof.
\end{enumerate}
\end{proof}

\begin{rem} \label{rem:minimize_constant}
Observe that the constant $\tilde C_0 2^{\alpha+1/2} e ^{\alpha} = \tilde C_0\sqrt{2} \left(2 e\right) ^{\alpha}$ in 
\eqref{eq:estimate_nu_prod_alg_improved} is significantly smaller than 
the one obtained in \cite{DeVore2013} for the algebraic rate, which is $C_0 2^{5\alpha + 1}$. However, we have here instead the logarithmic factor in $n$, even 
if we presume that it may be possible to remove it with a finer analysis.
\end{rem}

\section{Analysis of greedy algorithms in the kernel setting} \label{sec:analysis_kernel}
This section introduces and analyses $\beta$-greedy algorithms, that are a scale of greedy algorithms which generalize the $P$-, $f \cdot P$-, $f$- and 
$f/P$-greedy algorithms.

We work under the assumption 
\begin{align}
\label{eq:assumption_kernel}
\Vert P_0 \Vert_{L^{\infty}(\Omega)} = \sup_{x \in \Omega} \Vert k(\cdot, x) \Vert_{\ns} = \sup_{x \in \Omega} \sqrt{k(x,x)} \leq 1.
\end{align}

\subsection{A scale of greedy algorithms: $\beta$-greedy} \label{subsec:beta_greedy}

We start with the definition of $\beta$-greedy algorithms.

\begin{definition} \label{def:beta_greedy_algorithms}
A greedy kernel algorithm is called $\beta$-greedy algorithm with $\beta \in [0, \infty]$, if the next interpolation 
point is chosen as follows. 
\begin{enumerate}
\item For $\beta \in [0, \infty)$ according to
\begin{align}
\begin{aligned} \label{eq:beta_greedy_selection_criterion}
x_{n+1} = \argmax_{x \in \Omega \setminus X_n} &|(f-s_n)(x)|^\beta \cdot P_n(x)^{1-\beta} 
.
\end{aligned}
\end{align}
\item For $\beta = \infty$ according to the $f/P$-greedy algorithm.
\end{enumerate}
\end{definition}
As depicted in Figure~\ref{fig:beta_greedy_visualization}, for $\beta = 0$ this is the $P$-greedy algorithm, for $\beta = 1/2$ it is the $f \cdot 
P$-algorithm, and for $\beta = 1$ it is the $f$-greedy algorithm.
In the limit $\beta \rightarrow \infty$ it makes sense to define the algorithm to be the $f/P$-greedy algorithm
\footnote{In this framework, the $f/P$-greedy algorithm is a limit case. This can be seen as an explanation  why the 
$f/P$-greedy selection rule is sometimes not well defined, as discussed in Example 6 in \cite{WENZEL2021105508}.}. 

Observe that the $\beta$-greedy algorithms are well defined also for $1 < \beta < \infty$. Indeed, in this case $1 - \beta < 
0$ and thus the power function part occurs as a divisor, and this may potentially be a problem since $P_n(x_i) = 0$ for all $1\leq i \leq n$. Nevertheless,  
the standard power function estimate gives
\begin{align*}
|(f-s_n)(x)|^\beta \cdot P_n(x)^{1-\beta} &= \frac{P_n(x)^\beta \cdot \Vert f-s_n \Vert_{\ns}^\beta}{P_n(x)^{\beta-1}} \leq \Vert f-s_n \Vert_{\ns}^\beta 
P_n(x),
\end{align*}
i.e.\ it holds $\lim_{x \rightarrow x_j} |(f-s_n)(x)|^\beta \cdot P_n(x)^{1-\beta} = 0$ for all $x_j \in X_n$.

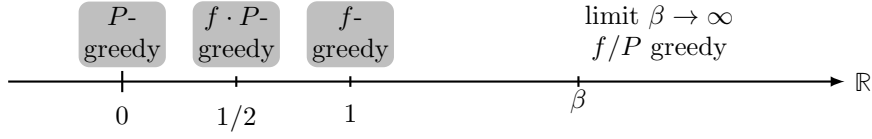
\begin{figure}[t]
\label{fig:number_line}
\setlength\fwidth{.4\textwidth}
\begin{center}
\begin{tikzpicture}[>=latex, thick]
\draw[->] (0,0) -- (11cm,0) node [right] {$\R$};
%\draw (1.5,-2pt) -- (1.5,2pt) node[above]{$P$-greedy} node[below=1ex]{0}; %\draw[color=blue]
\draw (1.5,-4pt) -- (1.5,4pt) node[below=10pt]{0};
\draw (1.5,5pt) node[above, align=center, fill=lightgray, rounded corners, inner sep=2pt]{$P$- \\ greedy};
\draw (3,-2pt) -- (3,2pt);
\draw (3, 5pt) node[above, align=center, fill=lightgray, rounded corners, inner sep=2pt]{$f \cdot P$- \\ greedy} node[below=10pt]{1/2};
\draw (4.5,-2pt) -- (4.5,2pt);
\draw (4.5, 5pt) node[above, align=center, fill=lightgray, rounded corners, inner sep=2pt]{$f$- \\greedy} node[below=10pt]{1};
\draw (7.5,-2pt) -- (7.5,2pt);
\draw (7.5,5pt) node[above right, align=center, rounded corners, inner sep=2pt]{$\text{limit} ~ \beta \rightarrow \infty$  \\ $f/P$ greedy} node[below=1ex]{$\beta$};
\end{tikzpicture}
\end{center}
\vspace{-.7cm}
\caption{Visualization of the scale of the $\beta$-greedy algorithms on the real line. Several important cases for $\beta \in \{0, 1/2, 1\}$ and $\beta 
\rightarrow \infty$ are marked.}
\label{fig:beta_greedy_visualization}
\end{figure}

\begin{rem}[Generalizations of the $\beta$-selection rule]
We remark that it is sufficient to consider only one parameter $\beta > 0$ for the weighting of $|(f-s_n)(x)|$ and $P_n(x)$ as it was done in Eq.~
\eqref{eq:beta_greedy_selection_criterion}, in the sense that using two different parameters would be useless. 
Indeed, due to the strict monotonicity of the function $x \mapsto x^{1/\alpha}$, for some $\alpha > 0$ and for $\gamma 
\in \R$ it holds
\begin{align*}
\argmax_{x \in \Omega \setminus X_n} |(f-s_n)(x)|^\alpha \cdot P_n(x)^\gamma = \argmax_{x \in \Omega \setminus X_n} |(f-s_n)(x)| \cdot P_n(x)^{\gamma/\alpha},
\end{align*}
which shows that only the ratio $\gamma/\alpha$ is decisive. The specific parametrization via $\beta$ and $1-\beta$ in Eq. 
\eqref{eq:beta_greedy_selection_criterion} was chosen in order to obtain $f/P$-greedy as the limit case $\beta \rightarrow \infty$.
\end{rem}

\subsection{Analysis of $\beta$-greedy algorithms}
We can now prove the convergence of these algorithms.
So far, analysis of greedy kernel algorithms mainly focused on estimates on $\Vert f - s_i \Vert_{L^\infty(\Omega)}$. Here and in the following, different 
quantities will be analyzed with the goal of bounding instead $\min_{i=n+1, \dots, 2n} \Vert f - s_i \Vert_{L^\infty(\Omega)}$. 
We remark that no requirements on the kernel $k$ or the set $\Omega$ are needed for the results of this section, and especially for Theorem 
\ref{th:final_result}, as the proofs are based solely on RKHS theory.

We start by proving a key technical statement for greedy kernel interpolation that provides a bound on the product of the residual terms $r_i := f - s_i$. 
This result holds independently of the strategy that is used to select the points, greedy or not.

\begin{lemma} \label{lem:estimate_product}
For any sequence $\{ x_i \}_{i \in \N} \subset \Omega$ and any $f \in \ns$ it holds for all ${n=1, 2, \dots}$ that
\begin{align} \label{eq:estimate_product}
\left[ \prod_{i=n+1}^{2n} |r_i(x_{i+1})| \right]^{1/n} &\leq n^{-1/2} \cdot \Vert r_{n+1} \Vert_{\ns} \cdot \left[ \prod_{i=n+1}^{2n} P_i(x_{i+1}) 
\right]^{1/n}.
\end{align}
\end{lemma}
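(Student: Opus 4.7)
The plan is to combine the reproducing kernel Hilbert space geometry (the orthogonality of the Newton basis and the telescoping identity for $\|s_n\|_{\ns}^2$) with the AM--GM inequality applied to the ratios $|r_i(x_{i+1})|/P_i(x_{i+1})$. The factor $n^{-1/2}$ is the tell-tale sign of AM--GM on $n$ nonnegative terms whose sum is bounded, so the key identity to isolate is one that expresses a sum of squares of such ratios in terms of the $\ns$-norm of the residual.

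First I would recall that since $s_{i+1}$ is the orthogonal projection of $f$ onto $V_{i+1} = V_i \oplus \mathrm{span}\{v_{i+1}\}$ and $\{v_j\}$ is the Newton basis, one has
\begin{align*}
\|s_{i+1}\|_{\ns}^2 - \|s_i\|_{\ns}^2 = \langle f, v_{i+1}\rangle_{\ns}^2 = \left(\frac{|r_i(x_{i+1})|}{P_i(x_{i+1})}\right)^2,
\end{align*}
using the identity $\langle f, v_j\rangle_{\ns} = |(f - s_{j-1})(x_j)|/P_{X_{j-1}}(x_j)$ mentioned in the background. Telescoping from $i=n+1$ up to $i=2n$ and using $\|r_i\|_{\ns}^2 = \|f\|_{\ns}^2 - \|s_i\|_{\ns}^2$ then yields
\begin{align*}
\sum_{i=n+1}^{2n} \left(\frac{|r_i(x_{i+1})|}{P_i(x_{i+1})}\right)^2 = \|r_{n+1}\|_{\ns}^2 - \|r_{2n+1}\|_{\ns}^2 \leq \|r_{n+1}\|_{\ns}^2,
\end{align*}
where the inequality uses the monotonic decrease of $\|r_i\|_{\ns}$ (again orthogonality of projection).

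Second, I would apply the AM--GM inequality to the $n$ nonnegative terms $\bigl(|r_i(x_{i+1})|/P_i(x_{i+1})\bigr)^2$, $i = n+1, \dots, 2n$, obtaining
\begin{align*}
\left[\prod_{i=n+1}^{2n} \frac{|r_i(x_{i+1})|}{P_i(x_{i+1})}\right]^{2/n} \leq \frac{1}{n}\sum_{i=n+1}^{2n} \left(\frac{|r_i(x_{i+1})|}{P_i(x_{i+1})}\right)^2 \leq \frac{\|r_{n+1}\|_{\ns}^2}{n}.
\end{align*}
Taking square roots and multiplying by $\left[\prod_{i=n+1}^{2n} P_i(x_{i+1})\right]^{1/n}$ produces exactly the claimed inequality \eqref{eq:estimate_product}. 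A minor subtlety is that if $P_i(x_{i+1}) = 0$ for some $i$ in the range, then $x_{i+1}$ coincides with an earlier point and $r_i(x_{i+1}) = 0$ as well; in that case both sides vanish and the bound is trivial, so the AM--GM step is only carried out on the indices where $P_i(x_{i+1}) > 0$.

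I do not anticipate a genuine obstacle here: the only conceptual point is recognizing that the $n^{-1/2}$ factor cannot come from the crude pointwise power-function estimate (which would only give $\|r_i\|_{\ns}$ per factor), but must come from exploiting the square-summability of the Newton coefficients. Once that is identified, the proof is essentially a one-line application of AM--GM to the telescoping sum, and it is independent of how the points are chosen, which is why the lemma holds for an arbitrary sequence $\{x_i\}_{i\in\N}$.
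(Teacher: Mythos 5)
Your proof is correct and follows essentially the same route as the paper: both use the square-summability of the Newton coefficients $\langle f, v_{j+1}\rangle_{\ns} = |r_j(x_{j+1})|/P_j(x_{j+1})$ to bound $\sum_{i=n+1}^{2n}(|r_i(x_{i+1})|/P_i(x_{i+1}))^2$ by $\|r_{n+1}\|_{\ns}^2$, followed by AM--GM on these $n$ terms. The paper writes this bound via the identity $\sum_{i=n+1}^{2n}(\cdot)^2 = \|s_{2n+1}\|_{\ns}^2 - \|s_{n+1}\|_{\ns}^2 \leq \|f\|_{\ns}^2 - \|s_{n+1}\|_{\ns}^2$ rather than your telescoping of $\|s_{i+1}\|^2 - \|s_i\|^2$ and dropping $\|r_{2n+1}\|_{\ns}^2$, but these are the same computation; your explicit handling of the degenerate case $P_i(x_{i+1})=0$ is a reasonable extra precaution that the paper leaves implicit.
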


\begin{proof}
Let 
\begin{equation*}
R_n^2:= \left[ \prod_{i=n+1}^{2n} \left( \frac{r_i(x_{i+1})}{P_i(x_{i+1})} \right)^2 \right]^{1/n}.
\end{equation*}
The geometric arithmetic mean inequality gives
\begin{align*} 
R_n^2 
&\leq \frac{1}{n} \sum_{i=n+1}^{2n} \left(\frac{r_i(x_{i+1})}{P_i(x_{i+1})} \right)^2
= \frac{1}{n}\left(\sum_{i=1}^{2n} \left(\frac{r_i(x_{i+1})}{P_i(x_{i+1})} \right)^2 
- \sum_{i=1}^{n} \left(\frac{r_i(x_{i+1})}{P_i(x_{i+1})} \right)^2 \right).
\end{align*}
We now use Eq.\ \eqref{eq:ns_norm_via_sum} applied to $s_{2n+1}$ and $s_{n+1}$, and the properties of orthogonal projections to obtain
\begin{align*} 
R_n^2
&\leq \frac{1}{n} \left( \Vert s_{2n+1} \Vert_{\ns}^2 - \Vert s_{n+1}\Vert_{\ns}^2 \right)
\leq \frac{1}{n}  \left( \Vert f \Vert_{\ns}^2 - \Vert s_{n+1} \Vert_{\ns}^2 \right)\\
&= \frac{1}{n}  \Vert f - s_{n+1} \Vert_{\ns}^2
= \frac{1}{n}  \Vert r_{n+1} \Vert_{\ns}^2.
\end{align*}
It follows that $R_n \leq n^{-1/2} \cdot \Vert r_{n+1} \Vert_{\ns}$, and thus
\begin{equation*}
\left[ \prod_{i=n+1}^{2n} |r_i(x_{i+1})| \right]^{1/n} \leq n^{-1/2} \cdot \Vert r_{n+1} \Vert_{\ns} \cdot \left[ \prod_{i=n+1}^{2n} 
P_i(x_{i+1}) \right]^{1/n}.
\end{equation*}

\end{proof}

In order to derive convergence statements in the ${L^\infty(\Omega)}$ norm based on Lemma \ref{lem:estimate_product}, it is now required to 
find a relationship between $|r_i(x_{i+1})|$ and $\Vert r_i \Vert_{L^\infty(\Omega)}$. To this end, we have the following Lemma for $\beta$-greedy algorithms. 
Observe that the sequence of points depends on the value of $\beta$, i.e.\ $x_n \equiv x_n^{(\beta)}$, but for notational convenience we drop the superscript.

\begin{lemma} \label{lem:beta_greedy}
Any $\beta$-greedy algorithm with $\beta \in [0,\infty]$ applied to a function $f \in \ns$ satisfies for $i = 0, 1, \dots$:
\begin{enumerate}[label={\alph*)}]
\item In the case of $\beta \in [0, 1]$:
\begin{align}\label{eq:bound_ri_first}
\Vert r_i \Vert_{L^\infty(\Omega)} &\leq |r_i(x_{i+1})|^{\beta} \cdot P_i(x_{i+1})^{1-\beta} \cdot \Vert f - s_i \Vert_{\ns}^{1-\beta}.
\end{align}

\item In the case of $\beta \in (1, \infty]$ with $1/\infty := 0$:
\begin{align}\label{eq:bound_ri_second}
\Vert r_i \Vert_{L^\infty(\Omega)} &\leq \frac{|r_i(x_{i+1})|}{P_i(x_{i+1})^{1-1/\beta}} \cdot \Vert P_i \Vert_{L^\infty(\Omega)}^{1 - 1/\beta}.
\end{align}
\end{enumerate}
\end{lemma}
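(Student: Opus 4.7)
The plan is to prove both parts by directly exploiting the $\beta$-greedy selection rule from Definition~\ref{def:beta_greedy_algorithms} together with the standard power function estimate $|r_i(x)| \leq P_i(x) \|r_i\|_{\mathcal{N}}$ from Eq.~\eqref{eq:standard_power_func_estimate}. In both cases the strategy is the same: write $|r_i(x)|$ as a product of two factors, bound one factor using the greedy maximality at $x_{i+1}$, and bound the other factor by either the native space norm (part a) or the supremum of the power function (part b). Then take the supremum in $x$.

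For part (a) with $\beta \in [0,1]$, I would split
\begin{align*}
|r_i(x)| = |r_i(x)|^\beta \cdot |r_i(x)|^{1-\beta}
\leq |r_i(x)|^\beta \cdot \bigl(P_i(x)\,\|r_i\|_{\mathcal{N}}\bigr)^{1-\beta}
= \bigl(|r_i(x)|^\beta P_i(x)^{1-\beta}\bigr)\,\|r_i\|_{\mathcal{N}}^{1-\beta},
\end{align*}
using the power function estimate on the factor with exponent $1-\beta \geq 0$. The term in parentheses is precisely the $\beta$-greedy selection functional of Eq.~\eqref{eq:beta_greedy_selection_criterion}, so by the maximality of $x_{i+1}$ it is bounded by $|r_i(x_{i+1})|^\beta P_i(x_{i+1})^{1-\beta}$. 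Taking $\sup_{x \in \Omega}$ of the left-hand side then yields \eqref{eq:bound_ri_first}.

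For part (b) with $\beta \in (1, \infty)$, the greedy rule maximizes $|r_i(x)|^\beta P_i(x)^{1-\beta} = |r_i(x)|^\beta / P_i(x)^{\beta - 1}$. Extracting the $\beta$-th root preserves the maximality, so for every $x \in \Omega \setminus X_i$ we have $|r_i(x)|/P_i(x)^{1-1/\beta} \leq |r_i(x_{i+1})|/P_i(x_{i+1})^{1-1/\beta}$. I would then split
\begin{align*}
|r_i(x)| = \frac{|r_i(x)|}{P_i(x)^{1-1/\beta}}\cdot P_i(x)^{1-1/\beta}
\leq \frac{|r_i(x_{i+1})|}{P_i(x_{i+1})^{1-1/\beta}}\cdot \|P_i\|_{L^\infty(\Omega)}^{1-1/\beta},
\end{align*}
and observe that at points $x \in X_i$ both sides trivially agree since $r_i(x) = 0$ there. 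For the limit case $\beta = \infty$, the selection rule is exactly $f/P$-greedy, so $|r_i(x)|/P_i(x) \leq |r_i(x_{i+1})|/P_i(x_{i+1})$ on $\Omega \setminus X_i$, and the same factorization (with exponent $1-1/\beta = 1$) gives \eqref{eq:bound_ri_second} after taking the supremum.

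The only subtle point is handling the division by $P_i(x)$ at the already selected interpolation points, but this is harmless because $|r_i|$ vanishes there as well; the calculation only has to be performed on $\Omega \setminus X_i$ and extends to the supremum over $\Omega$ by continuity (or simply by noting the bound is trivially satisfied at $X_i$). I do not anticipate a genuine obstacle: the lemma is essentially a direct reading of the selection rule combined with the standard power function bound.
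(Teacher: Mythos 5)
Your proof is correct. For part (b) your argument is essentially the paper's, modulo taking the $\beta$-th root before versus after passing to the supremum. For part (a), however, you take a genuinely different and cleaner route: you split $|r_i(x)| = |r_i(x)|^\beta |r_i(x)|^{1-\beta}$ directly at an arbitrary $x$, apply the power-function estimate to the second factor, recognize the first factor times $P_i(x)^{1-\beta}$ as the selection functional, and take the supremum. The paper instead introduces a point $\tilde x_{i+1}$ where $|r_i|$ attains its maximum, applies the selection criterion once at $x_{i+1}$ and once at $\tilde x_{i+1}$ to bound $P_i(\tilde x_{i+1})$, substitutes this into the power-function estimate evaluated at $\tilde x_{i+1}$, and then must rearrange for $\|r_i\|_{L^\infty(\Omega)}$ (which appears on both sides). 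The paper also treats $\beta = 0$ and $\beta = 1$ as separate boundary cases. Your factorization argument avoids the rearrangement step, avoids introducing a distinguished point of maximum, and handles $\beta \in \{0,1\}$ uniformly along with $\beta \in (0,1)$, at no cost in generality or rigor. The one minor thing worth noting explicitly is that for $\beta>0$ the selection bound $|r_i(x)|^\beta P_i(x)^{1-\beta} \leq |r_i(x_{i+1})|^\beta P_i(x_{i+1})^{1-\beta}$ extends trivially to $x \in X_i$ since both $r_i$ and $P_i$ vanish there (and for $\beta=0$ the left side equals $P_i(x)=0$ at $X_i$), so the supremum over all of $\Omega$ is justified; you do remark on this for part (b), and the same comment applies in part (a).
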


\begin{proof}
We prove the two cases separately:
\begin{enumerate}[label={\alph*)}]
\item For $\beta = 0$, i.e.\ the $P$-greedy algorithm, this is the standard power function estimate in conjunction with the $P$-greedy selection criterion 
$P_n(x_{n+1}) = \Vert P_n \Vert_{L^\infty(\Omega)}$. For $\beta = 1$ this holds with equality as it is simply the selection criterion 
of $f$-greedy since we have here $r_n(x_{n+1}) = \Vert r_n \Vert_{L^\infty(\Omega)}$. We thus consider $\beta \in (0, 1)$ and let $\tilde{x}_{i+1} \in 
\Omega$ be such that $|r_i(\tilde{x}_{i+1})| = \Vert r_i \Vert_{L^\infty(\Omega)}$. Then the selection criterion from 
Eq.~\eqref{eq:beta_greedy_selection_criterion} gives
\begin{equation*}
|r_i(x)|^\beta \cdot P_i(x)^{1-\beta} \leq |r_i(x_{i+1})|^\beta \cdot P_i(x_{i+1})^{1-\beta}\;\; \forall x\in\Omega,
\end{equation*}
and in particular
\begin{align*}
P_i(\tilde{x}_{i+1}) \leq \frac{|r_i(x_{i+1})|^{\frac{\beta}{1-\beta}}}{|r_i(\tilde{x}_{i+1})|^{\frac{\beta}{1-\beta}}} \cdot P_i(x_{i+1}).
\end{align*}
Using this bound with the standard power function estimate gives
\begin{align*}
\Vert r_i \Vert_{L^\infty(\Omega)} &= |r_i(\tilde{x}_{i+1})| \leq P_i(\tilde{x}_{i+1}) \cdot \Vert f - s_i \Vert_{\ns} \\
&\leq \frac{|r_i(x_{i+1})|^{\frac{\beta}{1-\beta}}}{|r_i(\tilde{x}_{i+1})|^{\frac{\beta}{1-\beta}}} \cdot P_i(x_{i+1}) \cdot \Vert f - s_i \Vert_{\ns} \\
&= \frac{|r_i(x_{i+1})|^{\frac{\beta}{1-\beta}}}{\Vert r_i \Vert_{L^\infty(\Omega)}^{\frac{\beta}{1-\beta}}} \cdot P_i(x_{i+1}) \cdot \Vert f - s_i \Vert_{\ns}.
\end{align*}
This can be rearranged for $\Vert r_i \Vert_{L^\infty(\Omega)}$ to yield the final result.
\item For $\beta \in (1, \infty)$, the selection criterion from Eq.~\eqref{eq:beta_greedy_selection_criterion} can be rearranged to
\begin{align*}
|r_i(x)|^\beta &\leq \frac{|r_i(x_{i+1})|^\beta}{P_i(x_{i+1})^{\beta - 1}} \cdot P_i(x)^{\beta - 1} \;\;\forall {x \in \Omega \setminus X_i},
\end{align*}
and taking the supremum $\sup_{x \in \Omega \setminus X_i}$ gives
\begin{align*}
\Vert r_i \Vert_{L^\infty(\Omega)} &\leq \frac{|r_i(x_{i+1})|}{P_i(x_{i+1})^{\frac{\beta - 1}{\beta}}} \cdot \Vert P_i 
\Vert_{L^\infty(\Omega)}^{\frac{\beta - 1}{\beta}}.
\end{align*}
For $\beta = \infty$, the selection criterion of the $f/P$-greedy algorithm can be directly rearranged to yield the statement (when using the notation 
$1/\infty = 0$).
\end{enumerate}
\end{proof}

Using the results of Lemma \ref{lem:beta_greedy} as lower bounds on $|r_i(x_{i+1})|$, it is now possible to control the left hand 
side of Inequality \eqref{eq:estimate_product}. This gives the main theorem of this section:

\begin{theorem} \label{th:final_result}
Any $\beta$-greedy algorithm with $\beta \in [0,\infty]$ applied to a function $f \in \ns$ satisfies the following error bound for $n=1, 2, \dots$:
\begin{enumerate}[label={\alph*)}]
\item In the case of $\beta \in [0, 1]$:
\begin{align} \label{eq:final_result_1}
\left[ \prod_{i=n+1}^{2n} \Vert r_i \Vert_{L^\infty(\Omega)} \right]^{1/n} \leq n^{-\beta/2} \cdot \Vert r_{n+1} \Vert_{\ns} \cdot \left[ 
\prod_{i=n+1}^{2n} P_i(x_{i+1}) \right]^{1/n}.
\end{align}

\item In the case of $\beta \in (1, \infty]$ with $1 / \infty := 0$:
\begin{equation}
\begin{aligned} \label{eq:final_result_2}
\left[ \prod_{i=n+1}^{2n} \Vert r_i \Vert_{L^\infty(\Omega)} \right]^{1/n} \leq n^{-1/2} &\cdot \Vert r_{n+1} \Vert_{\ns} \cdot \left[ \prod_{i=n+1}^{2n} 
P_i(x_{i+1})^{1/\beta} \right]^{1/n}. %
\end{aligned}
\end{equation}
\end{enumerate}
\end{theorem}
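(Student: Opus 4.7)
The plan is to combine the two preceding lemmas essentially as designed: Lemma~\ref{lem:beta_greedy} provides a pointwise/step-wise upper bound on $\Vert r_i \Vert_{L^\infty(\Omega)}$ in terms of $|r_i(x_{i+1})|$ and $P_i(x_{i+1})$, while Lemma~\ref{lem:estimate_product} gives exactly the control we need on the product $\prod |r_i(x_{i+1})|$ in terms of the product of power function values. So the natural approach is to take the product over $i = n+1, \dots, 2n$ of the inequality from Lemma~\ref{lem:beta_greedy}, then substitute in the bound from Lemma~\ref{lem:estimate_product}.

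For part a), applying Lemma~\ref{lem:beta_greedy} a) to each factor and taking the $n$-th root yields a product involving $|r_i(x_{i+1})|^\beta$, $P_i(x_{i+1})^{1-\beta}$, and $\Vert r_i\Vert_{\ns}^{1-\beta}$. Lemma~\ref{lem:estimate_product}, raised to the $\beta$-th power, absorbs the first factor and produces the expected $n^{-\beta/2}$ factor together with $\Vert r_{n+1}\Vert_{\ns}^\beta$ and a matching power of $P_i(x_{i+1})$, which combines with the $(1-\beta)$-power already present to give the full $\prod P_i(x_{i+1})$ term. The remaining ingredient is then $\prod_{i=n+1}^{2n} \Vert r_i\Vert_{\ns}^{(1-\beta)/n}$, which is controlled by the fact that $\Vert r_i\Vert_{\ns}$ is monotonically non-increasing in $i$ (since $s_i$ is the $\ns$-orthogonal projection of $f$ onto the nested spaces $V(X_i)$); hence $\Vert r_i\Vert_{\ns} \leq \Vert r_{n+1}\Vert_{\ns}$ for $i\geq n+1$, and since $1-\beta\geq 0$ the product is bounded by $\Vert r_{n+1}\Vert_{\ns}^{1-\beta}$. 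Multiplying this with the earlier $\Vert r_{n+1}\Vert_{\ns}^\beta$ yields a single clean factor of $\Vert r_{n+1}\Vert_{\ns}$.

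For part b), I apply Lemma~\ref{lem:beta_greedy} b) factor-wise and take the $n$-th root, obtaining a product of $|r_i(x_{i+1})|$, $P_i(x_{i+1})^{-(1-1/\beta)}$, and $\Vert P_i\Vert_{L^\infty(\Omega)}^{1-1/\beta}$. Lemma~\ref{lem:estimate_product} handles the $|r_i(x_{i+1})|$-product, producing the desired $n^{-1/2}\Vert r_{n+1}\Vert_{\ns}$ factor along with a $\prod P_i(x_{i+1})$ term that combines with the negative power to yield exactly $\prod P_i(x_{i+1})^{1/\beta}$. The residual $\Vert P_i\Vert_{L^\infty(\Omega)}^{1-1/\beta}$ factor is then disposed of by invoking the normalization assumption \eqref{eq:assumption_kernel}, namely $\Vert P_0\Vert_{L^\infty(\Omega)}\leq 1$, together with the pointwise monotonicity $P_{i}(x)\leq P_0(x)$ which comes from the fact that adding interpolation points can only enlarge the subspace $V_i$ and thus reduce the distance of $k(\cdot,x)$ from it. Since $1-1/\beta\geq 0$ for $\beta\in(1,\infty]$, each factor $\Vert P_i\Vert_{L^\infty(\Omega)}^{1-1/\beta}$ is at most $1$ and can be dropped.

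I do not expect any real obstacle here, as the argument is essentially bookkeeping once the right lemmas are in place; the only subtle points are (i) tracking the correct exponents of $\beta$ and $1-\beta$ so that the two factors of $\Vert r_{n+1}\Vert_{\ns}$ exactly combine, and (ii) noticing that in case b) one genuinely needs the normalization $\Vert P_0\Vert_{L^\infty(\Omega)}\leq 1$ to discard the $\Vert P_i\Vert_{L^\infty(\Omega)}^{1-1/\beta}$ factor, whereas in case a) one instead needs the monotonicity of the native-space residual. The endpoint cases $\beta=0,1,\infty$ can be checked separately for consistency, although they are already handled by the conventions $0^0=1$ and $1/\infty=0$.
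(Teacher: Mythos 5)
Your proposal is correct and follows essentially the same route as the paper's proof: combine Lemma~\ref{lem:beta_greedy} with Lemma~\ref{lem:estimate_product}, match the exponents of $P_i(x_{i+1})$, dispose of $\prod\Vert r_i\Vert_{\ns}^{1-\beta}$ via orthogonal-projection monotonicity in case a), and drop $\Vert P_i\Vert_{L^\infty(\Omega)}^{1-1/\beta}\leq 1$ via the normalization in case b). The only cosmetic difference is that the paper first inverts Lemma~\ref{lem:beta_greedy} into a lower bound on $|r_i(x_{i+1})|$ and feeds that into the left-hand side of Lemma~\ref{lem:estimate_product}, then rearranges, whereas you multiply Lemma~\ref{lem:beta_greedy} directly and substitute Lemma~\ref{lem:estimate_product} raised to the $\beta$-th power into the resulting product; the two orderings are algebraically identical.
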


\begin{proof}
We prove the two cases separately:
\begin{enumerate}[label={\alph*)}]
\item 
For $\beta = 0$, i.e. $P$-greedy, Eq.~\eqref{eq:bound_ri_first} gives $\Vert r_i \Vert_{L^\infty(\Omega)} \leq P_i(x_{i+1}) \cdot \Vert r_i 
\Vert_{\ns}$. 
Taking the product $\prod_{i=n+1}^{2n}$ and the $n$-th root in conjunction with the estimate $\Vert r_i \Vert_{\ns} \leq \Vert r_{n+1} \Vert_{\ns}$ for $i = 
n+1, \dots, 2n$ gives the result. 

For $\beta \in (0, 1]$, we start by reorganizing the estimate \eqref{eq:bound_ri_first} of Lemma \ref{lem:beta_greedy} to get
\begin{equation*}
|r_i(x_{i+1})| \geq \left(\Vert r_i \Vert_{L^\infty(\Omega)}^{1/\beta} \right) / \left(P_i(x_{i+1})^{\frac{1-\beta}{\beta}} \cdot 
\Vert r_i \Vert_{\ns}^{\frac{1-\beta}{\beta}}\right),
\end{equation*}
and we use this to bound the left hand side of Eq.~\eqref{eq:estimate_product} as
\begin{align*} 
n^{-1/2} \cdot &\Vert r_{n+1} \Vert_{\ns} \cdot \left[ \prod_{i=n+1}^{2n} P_i(x_{i+1}) \right]^{1/n} \geq \left[ \prod_{i=n+1}^{2n} |r_i(x_{i+1})| 
\right]^{1/n} \\
&\geq \left[ \prod_{i=n+1}^{2n} \left(\Vert r_i \Vert_{L^\infty(\Omega)}^{1/\beta} \right) / \left(P_i(x_{i+1})^{\frac{1-\beta}{\beta}} \cdot 
\Vert r_i \Vert_{\ns}^{\frac{1-\beta}{\beta}}\right) \right]^{1/n} \\
&= \left[ \prod_{i=n+1}^{2n} \Vert r_i \Vert_{L^\infty(\Omega)}^{1/\beta} \right]^{1/n} \left[ \prod_{i=n+1}^{2n} P_i(x_{i+1})^{\frac{1-\beta}{\beta}} \cdot 
\Vert r_i \Vert_{\ns}^{\frac{1-\beta}{\beta}} \right]^{-1/n}.
\end{align*}
Rearranging the factors, and using again the fact that $\Vert r_i \Vert_{\ns} \leq \Vert r_{n+1} \Vert_{\ns}$ for $i = 
n+1, \dots, 2n$, gives
\begin{align*}
&\left[ \prod_{i=n+1}^{2n} \Vert r_i \Vert_{L^\infty(\Omega)}^{1/\beta} \right]^{1/n} \\
&\leq n^{-1/2} \cdot \Vert r_{n+1} \Vert_{\ns} \cdot \left[\prod_{i=n+1}^{2n} P_i(x_{i+1})^{1/\beta} \right]^{1/n} \cdot \left[ \prod_{i=n+1}^{2n} \Vert r_i 
\Vert_{\ns}^{\frac{1-\beta}{\beta}}  \right]^{1/n} \\
&\leq n^{-1/2} \cdot \Vert r_{n+1} \Vert_{\ns} \cdot \left[ \prod_{i=n+1}^{2n} P_i(x_{i+1})^{1/\beta} \right]^{1/n} \cdot \Vert r_{n+1} 
\Vert_{\ns}^{\frac{1-\beta}{\beta}} \\
&\leq n^{-1/2} \cdot \Vert r_{n+1} \Vert_{\ns}^{1/\beta} \cdot \left[ \prod_{i=n+1}^{2n} P_i(x_{i+1})^{1/\beta} \right]^{1/n}.
\end{align*}
Now, the inequality can be raised to the exponent $\beta$ to give the final statement.

\item For $\beta \in (1, \infty]$ we proceed similarly by first rewriting Eq.~\eqref{eq:bound_ri_second} of Lemma \ref{lem:beta_greedy} as
\begin{equation*}
|r_i(x_{i+1})| \geq \left(\Vert r_i \Vert_{L^\infty(\Omega)} \cdot P_i(x_{i+1})^{1-1/\beta}\right)/\left(\Vert P_i \Vert_{L^\infty(\Omega)}^{1-1/\beta}\right),
\end{equation*}
and we lower bound the left hand side of Equation \eqref{eq:estimate_product} as
\begin{align*} 
n^{-1/2} \cdot &\Vert r_{n+1} \Vert_{\ns} \cdot \left[ \prod_{i=n+1}^{2n} P_i(x_{i+1}) \right]^{1/n} \geq \left[ \prod_{i=n+1}^{2n} |r_i(x_{i+1})| 
\right]^{1/n} \\
&\geq \left[ \prod_{i=n+1}^{2n} \left(\Vert r_i \Vert_{L^\infty(\Omega)} \cdot P_i(x_{i+1})^{1-1/\beta}\right)/\left(\Vert P_i 
\Vert_{L^\infty(\Omega)}^{1-1/\beta}\right)\right]^{1/n}.
\end{align*}
Rearranging for $\left[ \prod_{i=n+1}^{2n} \Vert r_i \Vert_{L^\infty(\Omega)} \right]^{1/n}$ yields
\begin{align*}
&\left[ \prod_{i=n+1}^{2n} \Vert r_i \Vert_{L^\infty(\Omega)} \right]^{1/n} \\
&\leq n^{-1/2} \cdot \Vert r_{n+1} \Vert_{\ns} \cdot \left[\prod_{i=n+1}^{2n} \Vert P_i \Vert_{L^\infty(\Omega)}^{1-1/\beta} \right]^{1/n} \cdot \left[ 
\prod_{i=n+1}^{2n} P_i(x_{i+1})^{1/\beta} \right]^{1/n},
\end{align*}
which gives the final result due to $\Vert P_i \Vert_{L^\infty(\Omega)} \leq 1$ for all $i = 0, 1, ..$. 
\end{enumerate}
\end{proof}

\subsection{An improvement of the standard estimate}
As an additional consequence of Lemma \ref{lem:beta_greedy}, the following Corollary \ref{cor:improved_standard_power_func_estimate} gives a 
new inequality that can be seen as an \textit{improved standard power function estimate}, i.e. an improvement compared to the standard power 
function estimate from Eq.~\eqref{eq:standard_power_func_estimate}, that holds for any $\beta$-greedy algorithm.

\begin{cor}[Improved standard power function estimate] \label{cor:improved_standard_power_func_estimate}
Any $\beta$-greedy algorithm with $\beta \in [0,\infty]$ applied to a function $f \in \ns$ satisfies for $i = 0, 1, \dots$ the following improved standard 
power function estimate (with $1/\infty := 0$):
\begin{align} \label{eq:improved_standard_power_func_estimate}
\Vert r_i \Vert_{L^\infty(\Omega)} &\leq \Vert r_i \Vert_{\ns} \cdot 
\left\{
\begin{array}{ll}
P_i(x_{i+1}) & \beta \in [0, 1] \\
P_i(x_{i+1})^{1/\beta} & \beta \in (1, \infty] \\
\end{array}.
\right.
\end{align}
\end{cor}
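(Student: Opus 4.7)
The plan is to derive the corollary directly from Lemma \ref{lem:beta_greedy} by substituting the standard power function estimate
$|r_i(x_{i+1})| \leq P_i(x_{i+1}) \cdot \|r_i\|_{\ns}$
into the two bounds given there, and then simplifying using the normalization assumption \eqref{eq:assumption_kernel}.

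For the case $\beta \in [0,1]$, I would start from Eq.~\eqref{eq:bound_ri_first}, namely
\begin{equation*}
\Vert r_i \Vert_{L^\infty(\Omega)} \leq |r_i(x_{i+1})|^{\beta} \cdot P_i(x_{i+1})^{1-\beta} \cdot \Vert r_i \Vert_{\ns}^{1-\beta},
\end{equation*}
and bound the factor $|r_i(x_{i+1})|^\beta$ by $(P_i(x_{i+1}) \cdot \Vert r_i \Vert_{\ns})^\beta$. The exponents of $P_i(x_{i+1})$ and $\Vert r_i\Vert_{\ns}$ then recombine to yield exactly $P_i(x_{i+1}) \cdot \Vert r_i \Vert_{\ns}$, which is the first branch of the claim.

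For the case $\beta \in (1,\infty]$, I would apply the same standard power function estimate to the numerator of Eq.~\eqref{eq:bound_ri_second}, obtaining
\begin{equation*}
\Vert r_i \Vert_{L^\infty(\Omega)} \leq \frac{P_i(x_{i+1}) \cdot \Vert r_i \Vert_{\ns}}{P_i(x_{i+1})^{1-1/\beta}} \cdot \Vert P_i \Vert_{L^\infty(\Omega)}^{1 - 1/\beta} = P_i(x_{i+1})^{1/\beta} \cdot \Vert r_i \Vert_{\ns} \cdot \Vert P_i \Vert_{L^\infty(\Omega)}^{1-1/\beta}.
\end{equation*}
To remove the last factor, I would invoke assumption \eqref{eq:assumption_kernel} together with the fact that the power function is monotonically non-increasing in $n$ (a consequence of $V_n \subset V_{n+1}$ and the variational definition of $P_n$), so that $\Vert P_i \Vert_{L^\infty(\Omega)} \leq \Vert P_0 \Vert_{L^\infty(\Omega)} \leq 1$. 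Since $1 - 1/\beta \geq 0$, this factor is at most $1$ and can be dropped, giving the second branch of the claim.

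There is no real obstacle here: the only subtlety is making sure $\|P_i\|_{L^\infty(\Omega)}\leq 1$, which is where the normalization \eqref{eq:assumption_kernel} is used. The two branches are continuous at $\beta=1$ (both reduce to $P_i(x_{i+1})\cdot \Vert r_i\Vert_{\ns}$), and the limit $\beta \to \infty$ recovers the bound $\Vert r_i\Vert_{L^\infty(\Omega)}\leq \Vert r_i\Vert_{\ns}$ with the convention $1/\infty:=0$, consistent with the $f/P$-greedy case.
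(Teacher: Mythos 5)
Your proof is correct and follows exactly the paper's own argument: in both cases you apply the standard power function estimate $|r_i(x_{i+1})| \leq P_i(x_{i+1})\cdot\Vert r_i\Vert_{\ns}$ inside the bounds of Lemma \ref{lem:beta_greedy}, and for $\beta>1$ you drop $\Vert P_i\Vert_{L^\infty(\Omega)}^{1-1/\beta}\leq 1$ via the normalization \eqref{eq:assumption_kernel} and monotonicity. Your added remark on continuity at $\beta=1$ and the $\beta\to\infty$ limit is a sensible sanity check but plays no role in the proof.
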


\begin{proof}
For both $\beta \in [0, 1]$ and $\beta \in (1, \infty]$ we use the upper bounds on $\Vert r_i \Vert_{L^\infty(\Omega)}$ as stated in Lemma 
\ref{lem:beta_greedy} 
and further estimate the quantity $|r_i(x_{i+1})|$ via the standard power function estimate from Eq.\ \eqref{eq:standard_power_func_estimate} to get
\begin{align*}
\Vert r_i \Vert_{L^\infty(\Omega)} 
&\leq |r_i(x_{i+1})|^{\beta} \cdot P_i(x_{i+1})^{1-\beta} \cdot \Vert r_i \Vert_{\ns}^{1-\beta}
\leq P_i(x_{i+1}) \cdot \Vert r_i \Vert_{\ns}
\end{align*}
for $\beta \in [0,1]$, and
\begin{align*}
\Vert r_i \Vert_{L^\infty(\Omega)} 
&\leq \frac{|r_i(x_{i+1})|}{P_i(x_{i+1})^{1-1/\beta}} \cdot \Vert P_i \Vert_{L^\infty(\Omega)}^{1 - 1/\beta}
\leq P_i(x_{i+1})^{1/\beta} \cdot \Vert r_i \Vert_{\ns}
\end{align*}
for $\beta \in (1, \infty]$ by using $\Vert P_i \Vert_{L^\infty(\Omega)} \leq \Vert P_0 \Vert_{L^\infty(\Omega)} \leq 1$ for all $i = 0, 1, ..$ (see Eq.\ \eqref{eq:assumption_kernel}).
\end{proof}

The estimate from Eq.~\eqref{eq:improved_standard_power_func_estimate} is an improved estimate in comparison to Eq.~\eqref{eq:standard_power_func_estimate}, 
in that it provides a bound on $\Vert r_i \Vert_{L^\infty(\Omega)}$ instead of $|r_i(x_{i+1})|$, and this is a strictly larger quantity except that in the case 
of the $f$-greedy algorithm (i.e. $\beta=0$), where they coincide. Moreover, for $\beta \in [0, 1]$ the right hand side of the estimates of 
Eq.~\eqref{eq:standard_power_func_estimate} and \eqref{eq:improved_standard_power_func_estimate} coincide, while for $\beta>1$ this improvement comes at the 
price of a smaller exponent on the power function term, since $1/\beta<1$.

\begin{rem}
We will see in the following how to obtain convergence rates of the term $\min_{n+1\leq i\leq 2n} \Vert r_i \Vert_{L^{\infty}(\Omega)}$. 
From a practitioner point of view this kind of result might be unsatisfactory, as it is unclear which interpolant $s_i$ gives the best 
approximation. In this case it is possible to resort to the improved standard power function estimate of Corollary 
\ref{cor:improved_standard_power_func_estimate}: This inequality suggests to pick $s_{i^*}$ with $i^* := \argmin_{n+1\leq i \leq 2n} P_i(x_{i+1})$.
\end{rem}

\section{Convergence rates for greedy kernel interpolation}
\label{sec:conv_rates}

We can finally combine the abstract Hilbert space analysis from Section \ref{sec:analysis_abstract} and the greedy kernel interpolation analysis from Section 
\ref{sec:analysis_kernel}, and apply them to concrete classes of kernels. 

First of all, we recall a convenient connection that was established in \cite{SH16b} between the abstract analysis of \cite{DeVore2013} and kernel 
interpolation. We repeat it as we need to include also the extension of Section \ref{sec:analysis_abstract}, i.e., the new quantity $\nu_n$.
The goal is to frame the $\beta$-greedy algorithms as particular instances of the general greedy algorithm of Section \ref{sec:analysis_abstract}. In this view 
we choose $\mathcal{H} = \ns$ and $\mathcal{F} = \{k(\cdot, x), x \in \Omega\}$. The fact that this set is compact is implied by the decay to zero of its 
Kolmogorov width, that is equivalent to the existence of a sequence of points such that the associated power function converges to 
zero (see Eq.\ \eqref{eq:connection_2} below). This choice means that $f = k(\cdot, x) \in \mathcal{F}$ can be uniquely associated with an $x \in 
\Omega$ and 
vice versa. This yields a realization of the abstract greedy algorithm that produces an approximation set 
\begin{equation*}
V_n = \Sp\{f_0, \dots, f_{n-1} \} = \Sp\{ k(\cdot, x_i) ~ | ~ i=1, \dots, n\} = V(X_n),
\end{equation*}
and thus this is a greedy kernel algorithm, with an appropriate selection rule. Table \ref{tab:connection} summarizes these assignments. 
\begin{table}[h!]
\centering
\caption{Connection between the abstract setting and the kernel setting.}
\label{tab:connection}
\begin{tabular}{|l|l|c|l|} 
 \hline
Abstract setting & $\mathcal{H}$ & $f \in \mathcal{F} $ & $\Sp \{ f_0, \dots, f_{n-1} \}$ \\ \hline
Kernel setting & $\ns$ & $ k(\cdot, x), ~ x \in \Omega$ & $\Sp \{ k(\cdot, x_i), x_i \in X_n \}$ \\ 
 \hline
\end{tabular}
\end{table}

With these choices, as can be seen from the definition in Eq.~\eqref{eq:quantities}, $\sigma_n$ is simply the maximal power function value and $\nu_n$ is the 
power function value at the selected point.  
\begin{align}\label{eq:connection_1}
\sigma_n & \equiv \sup_{f \in \mathcal{F}} \mathrm{dist}(f, V_n)_\mathcal{H} = \sup_{f \in \mathcal{F}} \Vert f - 
\Pi_{V_n}(f) \Vert_{\mathcal{H}} \nonumber\\
&~= \sup_{x \in \Omega} \Vert k(\cdot, x) - \Pi_{V(X_n)}(k(\cdot, x)) \Vert_{\ns} 
= \Vert P_n \Vert_{L^\infty(\Omega)},\\
\nu_n &\equiv \mathrm{dist}(f_n, V_n)_\mathcal{H} 
= \Vert f_n - \Pi_{V_n}(f) \Vert_{\mathcal{H}} \nonumber\\
&= \Vert k(\cdot, x_{n+1}) - \Pi_{V_n}(k(\cdot, x_{n+1}) \Vert_{\ns} = P_n(x_{n+1})\nonumber.
\end{align}
Moreover, $d_n$ can be similarly bounded as
\begin{align}\label{eq:connection_2}
d_n &\equiv \inf_{Y_n \subset \mathcal{H}} \sup_{f \in \mathcal{F}} \mathrm{dist}(f, Y_n)_\mathcal{H}  = \inf_{Y_n 
\subset \mathcal{H}} \sup_{f \in \mathcal{F}} \Vert f - \Pi_{Y_n}(f) \Vert_\mathcal{H}\\
&\leq \inf_{Y_n \subset \mathcal{F}} \sup_{f \in \mathcal{F}} \Vert f - \Pi_{Y_n}(f) \Vert_{\ns} = \inf_{X_n \subset \Omega} \Vert P_{X_n} 
\Vert_{L^\infty(\Omega)} \nonumber,
\end{align}
and thus any convergence statement on $\Vert P_{X_n} \Vert_{L^\infty(\Omega)}$ for a given set of points $X_n\subset\Omega$ gives via 
Eq.~\eqref{eq:connection_2} a bound on $d_n$. 

Additionally, observe that the assumption $\|f\|_\calh \leq 1$ for $f\in\mathcal F$ implies in the kernel setting that
\begin{equation}\label{eq:pf_normalization}
\Vert P_0 \Vert_{L^\infty(\Omega)} = \sup_{x\in\Omega} \sqrt{k(x,x)} = \sup_{x\in\Omega} \|k(\cdot,x)\|_{\ns}\leq 1.
\end{equation}

\subsection{Convergence rates for $\beta$-greedy algorithms} \label{subsec:beta_greedy_convergence}
From Theorem \ref{th:final_result} it is now easily possible to derive convergence statements and decay rates for the kernel greedy algorithms, by bounding the 
right-hand side by Inequality \eqref{cor:decay_abstract_setting_prod} and using the interpretations of $\nu_i$ and $d_n$ from from Eq.~\eqref{eq:connection_1} 
and Eq.~\eqref{eq:connection_2}. 

\begin{cor} \label{cor:decay_rates_greedy}
Assume that a $\beta$-greedy algorithm with $\beta \in [0,\infty]$ is applied to a function $f \in \ns$. Let $\alpha, C_0, c_0>0$ be given constants, and 
set $1 / \infty := 0$. 

\begin{enumerate}
\item If there exist a sequence $(X_n)_{n\in\N}\subset \Omega$ of sets of points such that
\begin{equation*}
\left\|f - \Pi_{X_n} f\right\|_{L^\infty(\Omega)} \leq C_0 n^{-\alpha} \|f\|_{\ns}\;\;\forall f\in \ns,
\end{equation*}
then for all $\beta\geq 0$ and for all $n\geq 3$ it holds
\begin{equation}\label{eq:decay_result_1}
\min_{n+1\leq i\leq 2n}\Vert r_i \Vert_{L^\infty(\Omega)} \leq C \cdot  n^{-\frac{\min\{1, \beta\}}{2}} (\log(n)\cdot n^{-1})^{\frac{\alpha}{\max\{1, \beta\}}} 
\Vert r_{n+1} \Vert_{\ns},
\end{equation}
with $C:=\left(2^{\alpha+1/2} \max\{1, C_0\} e^\alpha\right)^{\frac{1}{\max\{1, \beta\}}}$.
In particular
\begin{align*}
\min_{n+1\leq i\leq 2n} \Vert r_i \Vert_{L^\infty(\Omega)} &\leq C \cdot \log(n)^{\alpha} \cdot \Vert r_{n+1} \Vert_{\ns} \cdot \left\{
\begin{array}{ll}
n^{-\alpha-1/2} & f-\text{greedy} \\
n^{-\alpha-1/4} & f \cdot P-\text{greedy} \\
n^{-\alpha} & P-\text{greedy}
\end{array}
\right..
\end{align*}

\item If there exist a sequence $(X_n)_{n\in\N}\subset \Omega$ of sets of points\footnote{We remark that this sequence of sets of points does not need to be nested.} such that  
\begin{equation*}
\left\|f - \Pi_{X_n} f\right\|_{L^\infty(\Omega)} \leq C_0 e^{-c_0 n^\alpha} \|f\|_{\ns}\;\;\forall f\in \ns,
\end{equation*}
then for all $\beta\geq 0$ and for all $n\geq 2$ it holds
\begin{align}\label{eq:decay_result_2}
\min_{n+1\leq i\leq 2n}\Vert r_i \Vert_{L^\infty(\Omega)} 
&\leq C \cdot  n^{-\frac{\min\{1, \beta\}}{2}} e^{-c_1 n^\alpha} \Vert r_{n+1} \Vert_{\ns},
\end{align}
with $C:=\left(\sqrt{2\max\{1, C_0\}} \right)^{\frac{1}{\max\{1, \beta\}}}$ and $c_1 = 2^{-(2+\alpha)} c_0 / \max\{1, \beta\}$.
In particular
\begin{align*}
\min_{i=n+1, \dots, 2n} \Vert r_i \Vert_{L^\infty(\Omega)} \leq C \cdot e^{-c_1 n^\alpha} \cdot \Vert r_{n+1} \Vert_{\ns} \cdot \left\{
\begin{array}{ll}
n^{-1/2} & f-\text{greedy} \\
n^{-1/4} & f \cdot P-\text{greedy} \\
n^0 & P-\text{greedy}
\end{array}
\right..
\end{align*}

\item For $f/P$-greedy, for any kernel and for all $n\geq 1$ it holds 
\begin{align*}
\min_{n+1\leq i \leq 2n} \Vert r_i \Vert_{L^\infty(\Omega)} \leq n^{-1/2} \cdot \Vert r_{n+1} \Vert_{\ns}.
\end{align*}

\end{enumerate}
\end{cor}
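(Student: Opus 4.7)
The plan is to chain together Theorem~\ref{th:final_result} with Corollary~\ref{cor:decay_abstract_setting_prod}, using the dictionary between the abstract and kernel settings from Eqs.~\eqref{eq:connection_1}--\eqref{eq:connection_2}, and then passing from the geometric mean of $\|r_i\|_{L^\infty(\Omega)}$ to the minimum via the elementary bound $\min_{n+1\leq i\leq 2n}\|r_i\|_{L^\infty(\Omega)}\leq\bigl[\prod_{i=n+1}^{2n}\|r_i\|_{L^\infty(\Omega)}\bigr]^{1/n}$.

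First I would translate the worst-case interpolation assumption into a Kolmogorov-width bound. By the second equality in Eq.~\eqref{eq:def_power_function}, the hypothesis $\|f - \Pi_{X_n} f\|_{L^\infty(\Omega)} \leq C_0 n^{-\alpha}\|f\|_{\ns}$ (respectively the exponential analogue) implies $\|P_{X_n}\|_{L^\infty(\Omega)} \leq C_0 n^{-\alpha}$, and then Eq.~\eqref{eq:connection_2} upgrades this into $d_n \leq C_0 n^{-\alpha}$ (resp.\ $d_n\leq C_0 e^{-c_0 n^\alpha}$). Next, since any $\beta$-greedy rule is in particular an arbitrary selection rule in the sense of Theorem~\ref{th:abstract_estimate}, Corollary~\ref{cor:decay_abstract_setting_prod} applies with these $d_n$ estimates. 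The identification $\nu_i = P_i(x_{i+1})$ from Eq.~\eqref{eq:connection_1} converts its conclusion into the bound
\begin{equation*}
\Bigl[\prod_{i=n+1}^{2n} P_i(x_{i+1})\Bigr]^{1/n} \leq 2^{\alpha+1/2}\tilde C_0 e^{\alpha}\log(n)^{\alpha} n^{-\alpha}
\quad\text{(resp.\ }\leq \sqrt{2\tilde C_0}\,e^{-c_1 n^{\alpha}}\text{).}
\end{equation*}

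Now I would plug this into Theorem~\ref{th:final_result}. For $\beta\in[0,1]$, Eq.~\eqref{eq:final_result_1} directly gives an extra factor $n^{-\beta/2}\|r_{n+1}\|_{\ns}$ in front of the above product, and since in this range $\min\{1,\beta\}=\beta$ and $\max\{1,\beta\}=1$, this yields exactly the claimed form \eqref{eq:decay_result_1} (resp.\ \eqref{eq:decay_result_2}). For $\beta\in(1,\infty]$, Eq.~\eqref{eq:final_result_2} involves $P_i(x_{i+1})^{1/\beta}$, and the key observation is
\begin{equation*}
\Bigl[\prod_{i=n+1}^{2n} P_i(x_{i+1})^{1/\beta}\Bigr]^{1/n} = \Bigl(\Bigl[\prod_{i=n+1}^{2n} P_i(x_{i+1})\Bigr]^{1/n}\Bigr)^{1/\beta},
\end{equation*}
so the $\beta$-greedy bound gets raised to the power $1/\beta=1/\max\{1,\beta\}$, producing the corresponding rate with $\min\{1,\beta\}=1$ in front of the $n^{-1/2}$ factor. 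Part~3 (the $f/P$-greedy case) is a direct corollary of the same argument with $1/\beta:=0$, which annihilates the entire power-function product in \eqref{eq:final_result_2}; no bound on $d_n$ is needed, which is why the statement holds for any kernel.

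The main technical inconvenience I expect is the bookkeeping of the unified form with $\min\{1,\beta\}$ and $\max\{1,\beta\}$ and the correct tracking of the constant $C$ through the exponent $1/\beta$ in the regime $\beta\in(1,\infty]$; the special sub-cases $f$-, $f\cdot P$-, and $P$-greedy are then just numerical substitutions $\beta\in\{1,\tfrac{1}{2},0\}$ into the unified formula.
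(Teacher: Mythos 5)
Your proposal is correct and follows essentially the same route as the paper's proof: both translate the worst-case $\ns$-bound into a bound on the Kolmogorov width via Eq.~\eqref{eq:def_power_function} and Eq.~\eqref{eq:connection_2}, apply Corollary~\ref{cor:decay_abstract_setting_prod} to control $\bigl[\prod_{i=n+1}^{2n} P_i(x_{i+1})\bigr]^{1/n}$, plug into the unified form of Theorem~\ref{th:final_result} (raising the product to $1/\beta$ for $\beta>1$, exactly as you observe), and finally bound the minimum by the geometric mean. Your handling of the $f/P$ case via $1/\beta:=0$ collapsing the product to $1$ is the same observation as the paper's remark that $P_i(x_{i+1})\leq 1$ makes the product term drop out.
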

\begin{proof}
The proof is a simple combination of Corollary \ref{cor:decay_abstract_setting_prod} and Theorem \ref{th:final_result}, with the addition of the following 
simple steps: \\
First, the worst case bounds in $\ns$ (either algebraic or exponential) imply the same bound on the power function via Eq.\ \eqref{eq:def_power_function}.
Second, in all cases we use the results of Theorem \ref{th:final_result} in combination with the bound
\begin{equation*}
\min_{i=n+1, \dots, 2n} \Vert r_i \Vert_{L^\infty(\Omega)}\leq \left[\prod_{i=n+1}^{2n} \| r_i\|_{L^\infty(\Omega)}\right]^{1/n}.
\end{equation*}

\noindent Then, Eq.\ \eqref{eq:final_result_1} and \eqref{eq:final_result_2} of Theorem \ref{th:final_result} can be jointly written as 
\begin{align*}
\left[\prod_{i=n+1}^{2n} \Vert r_i \Vert_{L^\infty(\Omega)} \right]^{1/n} 
\leq n^{-\frac{\min\{1, \beta\}}{2}} \cdot \Vert r_{n+1} \Vert_{\ns} \cdot \left[\prod_{i=n+1}^{2n} P_i(x_{i+1}) \right]^{\frac{1}{n\max\{1, \beta\}}}.
\end{align*}
Plugging the bounds of Corollary \ref{cor:decay_abstract_setting_prod} in the last inequality gives the result of the first two points. 
The third point directly follows from Eq.\ \eqref{eq:final_result_2} for $\beta = \infty$ due to $P_i(x_{i+1}) \leq 1$ for all $i=1, 2, ...$\ . %
\end{proof}

\subsection{Translational invariant kernels}
Strictly positive definite and translational invariant kernels are popular kernels for applications. To specialize our result to this interesting case, in this 
subsection we use the following assumption.

\begin{assumption} \label{ass:assumption}
Let $k(x,y) = \Phi(x-y)$ be a strictly positive definite translational invariant kernel with associated reproducing kernel Hilbert space $\ns$, whereby the 
domain $\Omega \subset \R^d$ is assumed to be bounded with Lipschitz boundary and interior cone condition.
\end{assumption}

In this context we have the following special case of Corollary \ref{cor:decay_rates_greedy}. To highlight the results in the most relevant cases, we state 
them only for $\beta \in \{0, 1/2, 1, \infty\}$ even if similar statements hold for general $\beta>0$.
\begin{cor} \label{cor:decay_rates_greedy_transl}
Under Assumptions \ref{ass:assumption}, any $\beta$-greedy algorithm with $\beta \in \{0, 1/2, 1, \infty\}$ applied to some function $f \in \ns$ satisfies the 
following error bounds for $n=0, 1, ..$, where the constants are defined as in Corollary \ref{cor:decay_rates_greedy}.

\begin{enumerate}
\item In the case of kernels of finite smoothness $\tau > d/2$
\begin{align*}
\min_{i=n+1, \dots, 2n} \Vert r_i \Vert_{L^\infty(\Omega)} &\leq C \cdot \log(n)^{\tau/d-1/2} \cdot \Vert r_{n+1} \Vert_{\ns} \cdot \left\{
\begin{array}{ll}
n^{-\tau/d} & f-\text{greedy} \\
n^{1/4-\tau/d} & f \cdot P-\text{greedy} \\
n^{1/2-\tau/d} & P-\text{greedy}
\end{array}
\right..
\end{align*}
\item In the case of kernels of infinite smoothness:
\begin{align*}
\min_{i=n+1, \dots, 2n} \Vert r_i \Vert_{L^\infty(\Omega)} \leq C \cdot e^{-c_1 n^{1/d}} \cdot \Vert r_{n+1} \Vert_{\ns} \cdot \left\{
\begin{array}{ll}
n^{-1/2} & f-\text{greedy} \\
n^{-1/4} & f \cdot P-\text{greedy} \\
n^0 & P-\text{greedy}
\end{array}
\right..
\end{align*}
\end{enumerate}
\end{cor}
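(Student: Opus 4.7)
The plan is to recognize that Corollary~\ref{cor:decay_rates_greedy_transl} is essentially a direct specialization of the preceding Corollary~\ref{cor:decay_rates_greedy}, and that the translational invariance hypothesis only enters by providing, through the material recalled in Section~\ref{subsec:kernel_interpolation_t_inv}, a concrete sequence of interpolation sets $(X_n)_{n\in\N}$ whose worst-case approximation error decays at the claimed algebraic or exponential rate.

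First, I would verify the abstract hypothesis of Corollary~\ref{cor:decay_rates_greedy}. Under Assumption~\ref{ass:assumption}, the domain $\Omega$ admits quasi-uniform points $(X_n)_{n\in\N}$ with $h_{X_n,\Omega}\leq c_\Omega n^{-1/d}$. Inserting this into the bounds of Section~\ref{subsec:kernel_interpolation_t_inv} yields $\|P_{X_n}\|_{L^\infty(\Omega)}\leq \tilde c_1 n^{1/2-\tau/d}$ in the finite smoothness case, and $\|P_{X_n}\|_{L^\infty(\Omega)}\leq \tilde c_2 \exp(-\tilde c_3 n^{1/d})$ in the infinite smoothness case (cf. Eq.~\eqref{eq:error_estimates_in_n}). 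The standard power function estimate~\eqref{eq:standard_power_func_estimate} -- recalling that $\Pi_{X_n}$ is the orthogonal projection in $\ns$ onto $V(X_n)$, so $\|f-\Pi_{X_n}f\|_{\ns}\leq\|f\|_{\ns}$ -- then converts these into the required $L^\infty$ bounds of the form $\|f-\Pi_{X_n}f\|_{L^\infty(\Omega)}\leq C_0 n^{-\alpha}\|f\|_{\ns}$ with $\alpha=\tau/d-1/2$, respectively $\|f-\Pi_{X_n}f\|_{L^\infty(\Omega)}\leq C_0 e^{-c_0 n^{1/d}}\|f\|_{\ns}$ with $\alpha=1/d$.

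Having verified the hypothesis, I would plug the two values of $\alpha$ into the general bounds~\eqref{eq:decay_result_1} and~\eqref{eq:decay_result_2}, and simplify the resulting expression
\begin{equation*}
n^{-\min\{1,\beta\}/2}\,\bigl(\log(n)\,n^{-1}\bigr)^{\alpha/\max\{1,\beta\}}
\end{equation*}
for $\beta\in\{0,1/2,1\}$ (for which $\max\{1,\beta\}=1$), yielding the tabulated $n$- and $\log(n)$-exponents. Concretely, for $\beta=0$ one obtains the factor $\log(n)^\alpha n^{-\alpha}$, for $\beta=1/2$ the extra $n^{-1/4}$, and for $\beta=1$ the extra $n^{-1/2}$; substituting $\alpha=\tau/d-1/2$ in the algebraic case gives exactly the three exponents $1/2-\tau/d$, $1/4-\tau/d$, $-\tau/d$ (with the common $\log(n)^{\tau/d-1/2}$ prefactor), while in the exponential case the factor $e^{-c_1 n^{1/d}}$ is preserved since $\alpha=1/d$. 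The case $\beta=\infty$ follows directly from the universal $f/P$-bound (point~3 of Corollary~\ref{cor:decay_rates_greedy}), which requires no assumption on the kernel.

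Since every ingredient has already been proved in the preceding corollary, there is no genuine obstacle: the entire argument consists of checking applicability and arithmetic simplification. The only point demanding care is to ensure that the constant $C$ is consistent with Corollary~\ref{cor:decay_rates_greedy} (so that $\tilde C_0$ absorbs the $\tilde c_i$ constants from the Sobolev and exponential power function estimates), and to note that for $\beta=0$ in the infinite-smoothness case the $\log$ factor in Corollary~\ref{cor:decay_rates_greedy} does not appear because the exponential bound~\eqref{eq:estimate_nu_prod_exp} of Corollary~\ref{cor:decay_abstract_setting_prod} is used instead of the algebraic one~\eqref{eq:estimate_nu_prod_alg_improved}.
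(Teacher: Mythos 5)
Your proposal is correct and follows exactly the route the paper intends: the corollary is a direct specialization of Corollary~\ref{cor:decay_rates_greedy}, with $\alpha=\tau/d-1/2$ (algebraic case) and $\alpha=1/d$ (exponential case) supplied by the quasi-uniform power function bounds of Eq.~\eqref{eq:error_estimates_in_n}, converted to $L^\infty$ worst-case bounds via the (sharp) characterization of the power function as the worst-case error operator. The arithmetic simplification of the exponents for $\beta\in\{0,1/2,1\}$ checks out, and you correctly note that the $\beta=\infty$ case is inherited from the kernel-independent bound in point~3 of Corollary~\ref{cor:decay_rates_greedy}.
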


Observe that for any $\beta \in (0, 1]$ we have the additional convergence of order $n^{-\beta/2}$ or $n^{-1/2}$ for $\beta > 1$. The 
additional decay is faster for increasing $\beta\in(0,1]$, i.e. increasing the weight of the target data-dependent term in the selection criterion gives 
better decay rates. Especially, the proven decay rate for $f$-greedy is better than the one for $f \cdot P$-greedy which is better than the one for $P$-greedy.

This additional convergence proves in particular that the Kolmogorov barrier can be broken, i.e., approximation rates that are better than the ones provided by 
the Kolmogorov width can be obtained for any function in $\ns$. Indeed, as discussed above any bound on $d_n$ turns into a bound on $\|P_n\|_{L^\infty(\Omega)}$, which 
can then be used in Corollary \ref{cor:decay_rates_greedy} or Corollary \ref{cor:decay_rates_greedy_transl}.

This is particularly relevant for the kernels whose RKHS is norm equivalent to a Sobolev space. But also other general kernels of low smoothness are of 
interest, since it might happen that the power function is decaying at arbitrarily slow speed, while the adaptive points selected by a $\beta$-greedy algorithm 
provide an additional convergence rate.

Moreover, the additional decay for $\beta> 0$ is dimension independent and thus it does not incur in the curse of dimensionality. This is of interest
in particular for translational invariant kernels of Corollary \ref{cor:decay_rates_greedy_transl}, as both the algebraic and the exponential decay of the 
power function (or Kolmogorov width) degrade with the dimension $d$ and thus the additional term gains more importance.

Despite this notable relevance, the estimates of Corollary \ref{cor:decay_rates_greedy} and Corollary \ref{cor:decay_rates_greedy_transl} are likely not 
optimal in the algebraic case. Indeed, for kernels with algebraically decaying Kolmogorov width, in the case of the $P$-greedy algorithm ($\beta=0$) bounds 
without the additional $\log(n)^{\alpha}$ factor are known \cite{SH16b}. We thus expect that the inconvenient additional $\log(n)^{\alpha}$ factor 
is not required for any of the $\beta$-greedy algorithms.
We remark that this factor is related to the the additional $\epsilon$ within Corollary \ref{cor:decay_abstract_setting_prod}, but we did not find a way to get 
rid of it, with exception of $\beta = 0$, i.e. the $P$-greedy case.
Moreover, we obtained our bounds by means of the worst-case bounds on $(\prod_{i=n+1}^{2n} P_i(x_{i+1}))^{1/n}$ from Corollary 
\ref{cor:decay_abstract_setting_prod}. Numerically, a faster decay than the worst case bound from Corollary \ref{cor:decay_abstract_setting_prod} can often be observed (see the examples in Subsection \ref{subsec:num_experiments_1}). Especially, for each $\beta$ value we obtain a different sequence of points and thus 
a different decay of the corresponding power function values.

\begin{rem}[Additional convergence orders]
Additional convergence orders can be obtained from the decay of $\Vert r_n \Vert_{\ns}$. Even if this quantity is in general decaying at arbitrarily slow speed 
for a general $f\in \ns$, we mention the case of superconvergence \cite{Schaback1999a,Schaback2018a}, which allows to 
bound $\Vert r_i \Vert_{\ns} \leq C_f \cdot \Vert P_i \Vert_{L^2(\Omega)}$ for special functions $f \in \ns$. The 
original superconvergence requirement $f = Tg$ (whereby $T$ is the kernel integral operator and $g 
\in L^2(\Omega)$, i.e.\ $(T g)(x) = \int_{\Omega} k(x,y) g(y) \mathrm{d}y$) can be extended to functions $f\in\ns$ such that 
$|\langle f, g \rangle_{\ns} | \leq C_f \cdot \Vert g \Vert_{L^q(\Omega)}$ for all $g \in \ns$ (see \cite[Theorem 
19]{santin2020sampling}).
\end{rem}

\begin{rem}[Stability]
The stability of the greedy interpolation, as computed here by the so-called direct method, is mainly linked to the smallest eigenvalue of the 
kernel interpolation matrix. A standard result \cite{Schaback1995} gives the upper estimate $\lambda_{\min}(X_n) \leq 
P_{n-1}(x_{n})^2$. In view of the estimates of Equation \eqref{eq:decay_result_1} and 
\eqref{eq:decay_result_2}, this means that a faster convergence based on a faster decay of the power function values $P_i(x_{i+1})$ directly negatively 
influences the stability. This holds especially for $\beta > 1$, because in this case the upper bound for the convergence in terms of the power function 
scales with the exponent $1/\beta < 1$.
\end{rem}

\begin{rem}[Other greedy selection rules]
The analysis above shows that $\gamma$-greedy algorithms which were introduced in \cite{WENZEL2021105508} are actually closer to the $P$-greedy 
algorithm than to target data-dependent algorithms for the case of kernels of finite smoothness $\tau > d/2$. In this case for $\gamma$-greedy algorithms
the decay of 
$P_n(x_{n+1})$ can be both lower and upper bounded by a constant times $n^{1/2-\tau/d}$. As the point selection criteria of $\gamma$-stabilized greedy 
algorithms first of all look at the power function value via $P_n(x_{n+1}) \geq \gamma \cdot \Vert P_n \Vert_{L^\infty(\Omega)}$, there is no relationship as 
in Eq.~\eqref{eq:beta_greedy_selection_criterion} ($\beta > 0$). Thus we cannot derive additional convergence rates.
\end{rem}

\begin{rem}[Other norms]
For kernels of finite smoothness $\tau > d/2$ on a set $\Omega$ with Lipschitz boundary satisfying an interior cone condition, the optimal rates of 
$L^p$-convergence are of order $\norm{L^p(\Omega)}{r_n} \leq c_p n^{-\tau/d+ (1/2 - 1/p)_+}$. This rate is matched by the $P$-greedy algorithm (see 
\cite[Corollary 22]{WENZEL2021105508}), since it is proven to select asymptotically uniformly distributed points. 

In the case of the $f$-greedy algorithm, we can use the additional factor $n^{-1/2}$ from Corollary \ref{cor:decay_rates_greedy_transl} to get rid of the 
conversion from the $L^p$ to the $L^\infty$ norm, i.e. we have
\begin{align*}
\norm{L^p(\Omega)}{r_n} \leq \meas(\Omega)^{1/p} \norm{L^\infty(\Omega)}{r_n} \leq c_\infty \log(n)^{\tau/d-1/2} n^{-\tau/d}.
\end{align*}
So we have almost $L^p$-optimal results (up to the poly-logarithmic factor) for $p \in [1, 2]$ and even improved convergence for $p \in (2, \infty]$. 
Similar statements hold for general $\beta$-greedy algorithms.
\end{rem}

\section{Examples} \label{sec:numerical_experiments}

\subsection{Visualization of results of abstract setting} \label{subsec:num_experiments_1}

This subsection visualizes the results from the abstract analysis in Section \ref{sec:analysis_abstract}, especially Subsection \ref{subsec:first_extension}. 
Again we make use of the links recalled in the beginning of Section \ref{sec:conv_rates}, especially in Eq.\ \eqref{eq:connection_1} and \eqref{eq:connection_2}. \\
We consider the domain $\Omega = [0, 1]^3 \subset \R^3$ and the Gaussian kernel with kernel width parameter $2$, i.e.\ $k(x,y) = \exp(-4 \Vert x-y \Vert_2^2)$. 
Four different sequences of points are considered, with colors refering to Figure \ref{fig:decay_power_value}: 
\begin{enumerate}[label=\roman*.]
\item Blue: $P$-greedy algorithm on the whole domain $\Omega$.
\item Violet: $P$-greedy algorithm on the subdomain $\Omega_2 := \{ x \in \Omega ~|~ (x)_3 = 1/2 \}$. Like this, the dimension is effectively reduced from $d=3$ 
to $d=2$.
\item Yellow, red: The points are randomly picked within $\Omega$.
\end{enumerate}

The results are displayed in Figure \ref{fig:decay_power_value}:
\begin{itemize}
\item The upper two figures displays the quantities $\sigma_n = \Vert P_n \Vert_{L^\infty(\Omega)}$ (left) and $\nu_n = P_n(x_{n+1})$ (right). 
\item The lower two figures display
\begin{alignat*}{2}
n &\mapsto \left( \prod_{j=n+1}^{2n} \sigma_j \right)^{1/n} = \left( \prod_{j=n+1}^{2n} \Vert P_j \Vert_{L^\infty(\Omega)} \right)^{1/n} \hspace{1cm} && \text{(left),} \\
n &\mapsto \left( \prod_{j=n+1}^{2n} \nu_j \right)^{1/n} = \left( \prod_{j=n+1}^{2n} P_j(x_{j+1}) \right)^{1/n} && \text{(right)}.
\end{alignat*}
\end{itemize}
For the numerical experiments, the domain $\Omega$ was discretized using $2 \cdot 10^4$ random points and $\Omega_2$ was discretized by projecting the random 
points related to $\Omega$ onto $\Omega_2$. The algorithms run until 300 points are selected or the next selected Power function value satisfies $P_n(x_{n+1}) < 
10^{-5}$. 

From the top left picture one can infer that the displayed quantity $\Vert P_n \Vert_{L^\infty(\Omega)}$ decays fastest for the $P$-greedy algorithm. This was 
expected, as the algorithms directly aims at minimizing this quantity. However the displayed quantity $\Vert P_n \Vert_{L^\infty(\Omega)}$ does not drop at all 
for the $P$-greedy algorithm on $\Omega_2$, as it picks only points from $\Omega_2$ and thus does not fill $\Omega$.

Contrarily, the top right picture shows that the displayed quantity $P_n(x_{n+1})$ decays faster for the $P$-greedy algorithm on $\Omega_2$, while for 
the $P$-greedy algorithm on $\Omega$ we have exactly the same curve due to $P_n(x_{n+1}) = \Vert P_n \Vert_{L^\infty(\Omega)}$. The two further point choices 
exhibit a wiggling, noisy behaviour on the displayed $P_n(x_{n+1})$ quantity, which is related to the random point choice.

The two lower figures refer to the geometric mean $(\Pi_{j=n+1}^{2n} ~ .. ~ )^{1/n}$ of the quantities of the upper figures. In the lower left figure, we can see that only the curve 
related to the $P$-greedy algorithm on $\Omega$ decays fast, the other curves do not decay at all or only slowly -- because the points are not chosen in a way 
to minimize the maximal Power function value $\Vert P_n \Vert_{L^\infty(\Omega)}$. Contrarily, the $P$-greedy algorithm on $\Omega$ exhibits the slowest 
decay of the quantity $(\Pi_{j=n+1}^{2n} \nu_j)^{1/n}$, which is the same curve as in the lower left figure due to $\nu_j = P_j(x_{j+1}) = \Vert P_j 
\Vert_{L^\infty(\Omega)} = \sigma_j$. However, all the three other choices of points provide a faster decay of the displayed quantity $(\prod_{j=n+1}^{2n} 
P_j(x_{j+1}))^{1/n} = (\prod_{j=n+1}^{2n} \nu_j)^{1/n}$ . The theoretical reason for (at least) the same decay as the $P$-greedy algorithm on $\Omega$ was proven in Corollary 
\ref{cor:decay_abstract_setting_prod}.

\begin{figure}[ht]
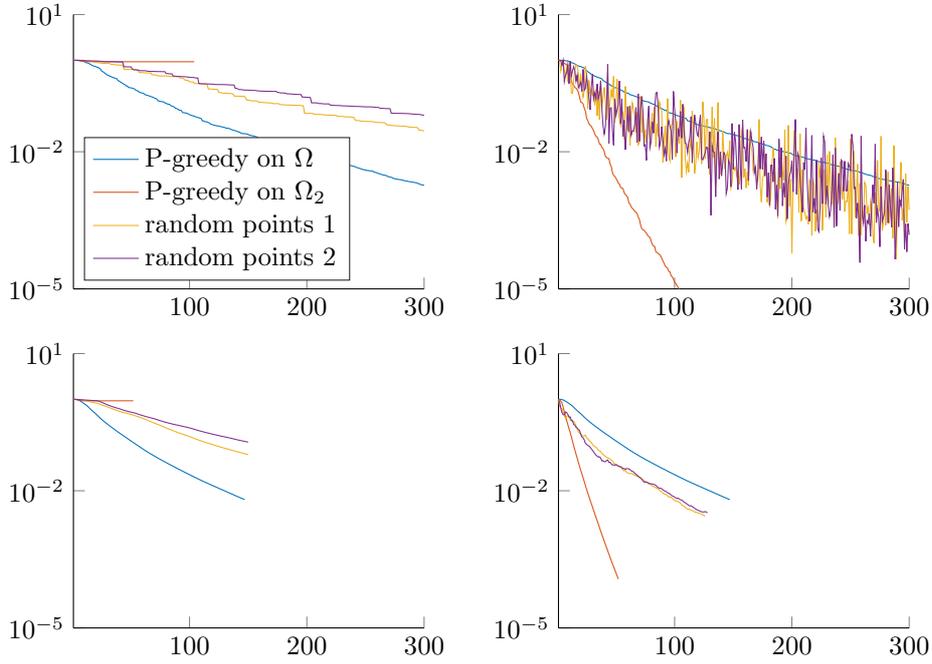

\setlength\fwidth{.4\textwidth}
\input{Figures/fig01_maximal_power_value.tex} ~
\input{Figures/fig02_selected_power_value.tex} \\
\input{Figures/fig03_product_maximal_power_value.tex} ~
\input{Figures/fig04_product_selected_power_value.tex} \\
\caption{Decay of several power function related quantities ($y$-axis) in the number of chosen interpolation points ($x$-axis) for four different choices of 
points: The upper two plots display the quantites $\sigma_n = \Vert P_n \Vert_{L^\infty(\Omega)}$ (left) respective $\nu_n = P_n(x_{n+1})$ (right). The lower 
two plots display the quantities $( \prod_{j=n+1}^{2n} \sigma_j )^{1/n} = ( \prod_{j=n+1}^{2n} \Vert P_j \Vert_{L^\infty(\Omega)} )^{1/n}$ (left) respective $( \prod_{j=n+1}^{2n} 
\nu_j )^{1/n} = ( \prod_{j=n+1}^{2n} P_j(x_{j+1}) )^{1/n}$ (right).}
\label{fig:decay_power_value}
\end{figure}

\subsection{$\beta$-greedy algorithms using the Wendland kernel} \label{subsec:num_experiments_2}

We consider the application of $\beta$-greedy algorithms for the particular example of the Wendland $k=0$ kernel on the domain $\Omega = [0,1]$, which is defined as
\begin{align*}
k(x,y) = \max(1 - |x-y|, 0),
\end{align*}
and thus a piecewise linear kernel. Its native space $\ns$ is norm equivalent to the Sobolev space $W^1_2(\Omega)$. It is immediate to see that kernel interpolation using the Wendland $k=0$ kernel on centers $X_n \subset \Omega$ boils down to linear spline interpolation on the subinterval $[\min X_n, \max X_n] \subset [0, 1]$. On $\Omega \setminus [\min X_n, \max X_n]$ the interpolant is still an affine function. \\
We consider the function $f: \Omega \rightarrow \R, 
x \mapsto x^\alpha$ for some $1/2 < \alpha < 1$. For $\alpha > 1/2$ it holds $f \in W^1_2(\Omega)$, thus $f \in \ns$.
It can be shown, that in the case of asymptotically uniform interpolation points --- i.e.\ $q_n \asymp h_n \asymp n^{-1}$ --- it is possible to lower-bound the error as (for details see Appendix \ref{sec:details_example})
\begin{align}
\label{eq:example_lower_bound_uniform}
\Vert f - s_n \Vert_{L^\infty(\Omega)} \geq C_\alpha \cdot n^{-\alpha}
\end{align}
for $C_\alpha > 0$. Furthermore, independent of the way the interpolation points $X_N$ were chosen (i.e.\ even for optimally chosen points), it holds
\begin{align}
\label{eq:example_lower_bound_optimal}
\Vert f - s_n \Vert_{L^\infty(\Omega)} \geq C \cdot n^{-2}
\end{align}
for some $C > 0$. Thus we can infer:
\begin{itemize}
\item Any (greedy) algorithm that yields asymptotically uniformly distributed points cannot have a convergence rate better than $n^{-\alpha}$ for this particular example. This includes especially the $P$-greedy algorithm, but also any $\gamma$-stabilized greedy algorithms \cite{WENZEL2021105508}, as they are known to provide asymptotically uniform points as well, see \cite[Theorem 20]{WENZEL2021105508}. Thus this example shows that $\gamma$-stabilized greedy algorithms cannot be expected in general to give a better approximation rate than the $P$-greedy algorithm (however they were motivated by their use for the preasymptotic range). \\
Especially for $\alpha \rightarrow 1/2$, the convergence rate can be abritrary close to $1/2$.
\item For the $f$-greedy and $f \cdot P$-greedy algorithms we have a convergence of at least $\log(n)^{1/2} \cdot n^{-1}$ respective $\log(n)^{1/2} \cdot n^{-3/4}$ according to Corollary \ref{cor:decay_rates_greedy_transl}, which is strictly better compared to the $P$-greedy algorithm.
\end{itemize}

Figure \ref{fig:beta_greedy_example} visualizes the convergence of several $\beta$-greedy algorithms for the described setting. 
One can observe that the error for the $P$-greedy algorithm ($\beta = 0$) decays approximately according to $n^{-1/2}$, which is in accordance with Eq.\ \eqref{eq:example_lower_bound_uniform}. 
For the $f$-greedy algorithm ($\beta = 1$) the error seems to decay according to $n^{-2}$, which is the fastest possible decay rate according to Eq.\ \eqref{eq:example_lower_bound_optimal}. 
For all intermediate $\beta$ values one can observe intermediate convergence rates: For values of $\beta$ closer to $1$, the error decays faster. 
The $f/P$-greedy algorithm ($\beta = \infty$) seems to give a convergence in between $n^{-1/2}$ and $n^{-2}$. \\

We remark that this behaviour of the error decay depending on $\beta$ is not unique to the Wendland $k=0$ kernel, but can also be observed for other kernels, domains and target functions $f$. 
This particular example was chosen, because it is analytically easily possible to derive several explicit statements on convergence rates for asymptotically uniform and adapted points.

\begin{figure}[h!t]
\setlength\fwidth{.7\textwidth}
\centering
\input{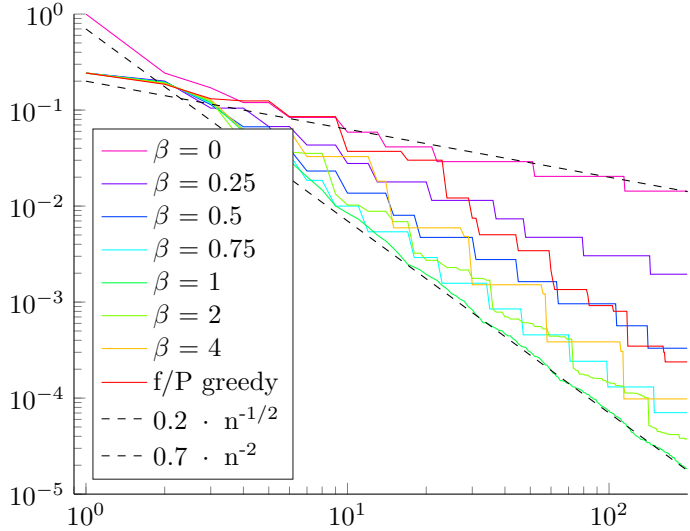}
\caption{Decay of the error $\Vert f - s_n^{(\beta)} \Vert_{L^\infty(\Omega)}$ ($y$-axis) for $\beta$-greedy algorithms in the number $n$ of chosen interpolation 
points ($x$-axis) for $\beta \in \{0, 0.25, 0.5, 0.75, 1, 2, 4, \infty\}$ and $f(x) = x^\alpha$ with $\alpha = 0.51$. Two additional dashed lines indicate a rate of convergence of $n^{-1/2}$ and 
$n^{-2}$.}
\label{fig:beta_greedy_example}
\end{figure}

\section{Conclusion and outlook} \label{sec:conclusion_outlook}

Using an abstract analysis of greedy algorithms in Hilbert spaces it was shown that arbitrary point sequences - e.g.\ generated from arbitrary greedy kernel 
algorithms - yield certain decay rates for specific power function quantities. Based on these results and refined greedy kernel interpolation analysis it was 
possible to investigate and prove convergence statements for a range of greedy kernel algorithms including the target data-dependent $f$-, $f \cdot P$- and 
$f/P$-greedy algorithms. The provided techniques and results will likely lead to further advancements, e.g.\ in the field of kernel quadrature.

Several points remain open, and they will be addressed in future research. First, the proven decay rate for the $f/P$-greedy algorithm is still not 
satisfactory and is likely improvable. Moreover, the results are independent of the special choice of function $f \in \ns$. How to make use of properties of 
that function? It would be desirable to conclude a faster decay of the $(\prod_{i=n+1}^{2n} P_i(x_{i+1}))^{1/n}$-quantity based on properties of the considered 
function $f \in \ns$. 
Finally, it is still unclear if it is possible to derive general statements on the decay of $\Vert f-s_n \Vert_{\ns}$, and what is the relationship between this 
fact and superconvergence. \\

\textbf{Acknowledgements:} The authors acknowledge the funding of the project by the Deutsche Forschungsgemeinschaft (DFG, German Research Foundation) under
Germany's Excellence Strategy - EXC 2075 - 390740016 and funding by the BMBF under contract 05M20VSA.  

\bibliography{references}
\bibliographystyle{abbrv}

\appendix

\section{Details on Subsection \ref{subsec:num_experiments_1}} \label{sec:details_example}

Consider the $k=0$ Wendland kernel and the domain $\Omega = [0, 1]$. Then the native space $\ns$ is norm equivalent to the Sobolev space $W^1_2(\Omega)$.  We 
remark that kernel interpolation using the Wendland $k=0$ kernel on centers $X_n \subset \Omega$ boils down to linear spline interpolation on the subinterval $[\min X_n, \max X_n] \subset [0, 1]$. On $\Omega \setminus [\min X_n, \max X_n]$ the interpolant is still an affine function. \\
We consider the function $f: \Omega \rightarrow \R, 
x \mapsto x^\alpha$ for some $1/2 < \alpha < 1$. For $\alpha > 1/2$ it holds $f \in W^1_2(\Omega)$, thus $f \in \ns$. 
We consider the interpolation using not-yet specified points $X_n \subset \Omega$. Define $z_1 := z_1^{(n)} := \min X_n, z_2 := z_2^{(n)} := \min X_n \setminus \{ z_1 \}$. We have for $n \geq 2$:

\begin{align*}
s_n(x) \big|_{x \in [z_1, z_2]} = \frac{z_2^\alpha - z_1^\alpha}{z_2 - z_1} \cdot (x-z_1) + z_1^\alpha
\end{align*}
\noindent We can estimate $\Vert f - s_n \Vert_{L^\infty(\Omega)}$ via:

\begin{align*}
\Vert f - s_n \Vert_{L^1([z_1, z_2])} &\leq \Vert f - s_n \Vert_{L^\infty([z_1, z_2])} \cdot | z_2 - z_1 | \\
&\leq \Vert f - s_n \Vert_{L^\infty(\Omega)} \cdot | z_2 - z_1 | \\
\Leftrightarrow \Vert f - s_n \Vert_{L^\infty(\Omega)} &\geq \Vert f - s_n \Vert_{L^1([z_1, z_2])} / | z_2 - z_1 |
\end{align*}
The integral $\Vert f - s_n \Vert_{L^1([z_1, z_2])}$ can be computed as
\begin{align*}
\Vert f - s_n \Vert_{L^1([z_1, z_2])} &= \int_{z_1}^{z_2} x^\alpha - \left( \frac{z_2^\alpha - z_1^\alpha}{z_2 - z_1} \cdot (x - z_1) + z_1^\alpha \right) ~ \mathrm{d}x \\
&= \frac{1}{1+\alpha} (z_2^{1+ \alpha} - z_1^{1+\alpha}) - \int_0^{z_2 - z_1} \frac{z_2^\alpha - z_1^\alpha}{z_2 - z_1} \cdot x + z_1^\alpha  ~ \mathrm{d}x \\
&= \frac{1}{1+\alpha} (z_2^{1+ \alpha} - z_1^{1+\alpha}) - \frac{1}{2} \cdot \frac{z_2^\alpha - z_1^\alpha}{z_2 - z_1} \cdot (z_2 - z_1)^2 - z_1^\alpha \cdot (z_2 - z_1).
\end{align*}
Thus we have
\begin{align*}
\Vert f - s_n \Vert_{L^\infty(\Omega)} &\geq \frac{\Vert f - s_n \Vert_{L^1([z_1, z_2])}}{z_2 - z_1} \\
&= \frac{1}{1+\alpha} \frac{z_2^{1+ \alpha} - z_1^{1+\alpha}}{z_2 - z_1} - \frac{1}{2} \cdot (z_2^\alpha - z_1^\alpha) - z_1^\alpha \\
&= \frac{1}{1+\alpha} \frac{z_2^{1+ \alpha} - z_1^{1+\alpha}}{z_2 - z_1} - \frac{1}{2} \cdot (z_2^\alpha + z_1^\alpha) \\
&= z_2^\alpha \cdot \left(  \frac{1}{1+\alpha} \frac{z_2 - z_1 z_1^{\alpha} z_2^{-\alpha}}{z_2 - z_1} - \frac{1 + z_1^\alpha z_2^{-\alpha}}{2} \right) \\
&= z_2^\alpha \cdot \left(  \frac{1}{1+\alpha} \frac{z_2 - z_1(1 + z_1^{\alpha} z_2^{-\alpha} - 1)}{z_2 - z_1} - \frac{1 + z_1^\alpha z_2^{-\alpha}}{2} \right) \\
&= z_2^\alpha \cdot \left( \frac{1}{1+\alpha} + \frac{1}{1+\alpha} \frac{z_1(1 - z_1^{\alpha} z_2^{-\alpha})}{z_2 - z_1} - \frac{1 + z_1^\alpha z_2^{-\alpha}}{2} \right) \\
&= z_2^\alpha \cdot \left( \frac{1}{1+\alpha} + \frac{1}{1+\alpha} \frac{1 - z_1^{\alpha} z_2^{-\alpha}}{z_2z_1^{-1} - 1} - \frac{1 + z_1^\alpha z_2^{-\alpha}}{2} \right).
\end{align*}
We proceed by setting $k := z_2/z_1$:
\begin{align*}
\Vert f - s_n \Vert_{L^\infty(\Omega)} &\geq z_2^\alpha \cdot \underbrace{\left( \frac{1}{1+\alpha} + \frac{1}{1+\alpha} \frac{1 - k^{-\alpha}}{k - 1} - \frac{1 + k^{-\alpha}}{2} \right)}_{=: h_\alpha(k)}.
\end{align*}
We consider asymptotically uniform points, i.e.\ $\exists C > 0 ~ \forall n \in \N ~ h_n / q_n \leq C$. Based on the definition of the separation and fill distance we can estimate
\begin{align*}
k \equiv \frac{z_2}{z_1} \geq \frac{z_1 + q_n}{z_1} \geq 1 + \frac{q_n}{z_1} \geq 1 + \frac{q_n}{h_n}
\end{align*}
Using the asymptotic uniformity, we have finally
\begin{align*}
k \geq 1 + \frac{q_n}{h_n} \geq 1 + C^{-1} > 1.
\end{align*}
Finally an analysis of the 1D function $h_\alpha: [1 + C^{-1}, \infty) \rightarrow \R$ shows that it holds 
\begin{align*}
h_\alpha(k) \geq h_\alpha(1 + C^{-1}) > 0
\end{align*}
for $k \in [1 + C^{-1}, \infty)$. This finally implies
\begin{align*}
\Vert f - s_n \Vert_{L^\infty(\Omega)} &\geq z_2^\alpha \cdot \min_{k \in [1+C^{-1}, \infty)} h_\alpha(k) \\
&\geq c \cdot n^{-\alpha}
\end{align*}
for some $c > 0$ due to $z_2 \geq q_n \geq \tilde{c} n^{-1}$.

\end{document}